\DeclareMathAlphabet{\mathpzc}{OT1}{pzc}{m}{it}
\begin{document}

\newtheorem{theorem}{\bf Theorem}[section]

\newtheorem{definition}{\bf Definition}[section]
\newtheorem{corollary}{\bf Corollary}[section]

\newtheorem{lemma}{\bf Lemma}[section]
\newtheorem{assumption}{Assumption}
\newtheorem{condition}{\bf Condition}[section]
\newtheorem{proposition}{\bf Proposition}[section]
\newtheorem{definitions}{\bf Definition}[section]
\newtheorem{problem}{\bf Problem}
\numberwithin{equation}{section}
\newcommand{\skp}{\vspace{\baselineskip}}
\newcommand{\noi}{\noindent}
\newcommand{\osc}{\mbox{osc}}
\newcommand{\lfl}{\lfloor}
\newcommand{\rfl}{\rfloor}

\theoremstyle{remark}
\newtheorem{example}{\bf Example}[section]
\newtheorem{remark}{\bf Remark}[section]

\newcommand{\img}{\imath}
\newcommand{\iy}{\infty}
\newcommand{\eps}{\varepsilon}
\newcommand{\del}{\delta}
\newcommand{\Rk}{\mathbb{R}^k}
\newcommand{\RR}{\mathbb{R}}
\newcommand{\spa}{\vspace{.2in}}
\newcommand{\V}{\mathcal{V}}
\newcommand{\E}{\mathbb{E}}
\newcommand{\I}{\mathbb{I}}
\newcommand{\PP}{\mathbb{P}}
\newcommand{\sgn}{\mbox{sgn}}
\newcommand{\ti}{\tilde}
\newcommand{\vs}{\varsigma}
\newcommand{\vr}{\varrho}
\newcommand{\ups}{\upsilon}

\newcommand{\QQ}{\mathbb{Q}}

\newcommand{\XX}{\mathbb{X}}
\newcommand{\XXz}{\mathbb{X}^0}
\newcommand{\MM}{\mathbb{M}}
\newcommand{\lan}{\langle}
\newcommand{\ran}{\rangle}
\newcommand{\lf}{\lfloor}
\newcommand{\rf}{\rfloor}
\def\wh{\widehat}
\newcommand{\defn}{\stackrel{def}{=}}
\newcommand{\txb}{\tau^{\epsilon,x}_{B^c}}
\newcommand{\tyb}{\tau^{\epsilon,y}_{B^c}}
\newcommand{\tilxb}{\tilde{\tau}^\eps_1}
\newcommand{\pxeps}{\mathbb{P}_x^{\eps}}
\newcommand{\non}{\nonumber}
\newcommand{\dist}{\mbox{dist}}

\newcommand{\Om}{\mathnormal{\Omega}}
\newcommand{\om}{\omega}
\newcommand{\vph}{\varphi}
\newcommand{\Del}{\mathnormal{\Delta}}
\newcommand{\Gam}{\mathnormal{\Gamma}}
\newcommand{\Sig}{\mathnormal{\Sigma}}

\newcommand{\tilyb}{\tilde{\tau}^\eps_2}
\newcommand{\beq}{\begin{eqnarray*}}
\newcommand{\eeq}{\end{eqnarray*}}
\newcommand{\beqn}{\begin{eqnarray}}
\newcommand{\eeqn}{\end{eqnarray}}
\newcommand{\ink}{\rule{.5\baselineskip}{.55\baselineskip}}

\newcommand{\bt}{\begin{theorem}}
\newcommand{\et}{\end{theorem}}
\newcommand{\deps}{\Del_{\eps}}
\newcommand{\dbl}{\mathbf{d}_{\tiny{\mbox{BL}}}}

\newcommand{\be}{\begin{equation}}
\newcommand{\ee}{\end{equation}}
\newcommand{\ac}{\mbox{AC}}
\newcommand{\BB}{\mathbb{B}}
\newcommand{\VV}{\mathbb{V}}
\newcommand{\DD}{\mathbb{D}}
\newcommand{\KK}{\mathbb{K}}
\newcommand{\HH}{\mathbb{H}}
\newcommand{\TT}{\mathbb{T}}
\newcommand{\CC}{\mathbb{C}}
\newcommand{\Z}{\mathbb{Z}}
\newcommand{\SSS}{\mathbb{S}}
\newcommand{\EE}{\mathbb{E}}
\newcommand{\NN}{\mathbb{N}}

\newcommand{\clg}{\mathcal{G}}
\newcommand{\clb}{\mathcal{B}}
\newcommand{\cls}{\mathcal{S}}
\newcommand{\clc}{\mathcal{C}}
\newcommand{\clj}{\mathcal{J}}
\newcommand{\clm}{\mathcal{M}}
\newcommand{\clx}{\X}
\newcommand{\cld}{\mathcal{D}}
\newcommand{\cle}{\mathcal{E}}
\newcommand{\clv}{\mathcal{V}}
\newcommand{\clu}{\mathcal{U}}
\newcommand{\clr}{\mathcal{R}}
\newcommand{\clt}{\mathcal{T}}
\newcommand{\cll}{\mathcal{L}}
\newcommand{\clz}{\mathcal{Z}}

\newcommand{\cli}{\mathcal{I}}
\newcommand{\clp}{\mathcal{P}}
\newcommand{\cla}{\mathcal{A}}
\newcommand{\clf}{\mathcal{F}}
\newcommand{\clh}{\mathcal{H}}
\newcommand{\N}{\mathbb{N}}
\newcommand{\Q}{\mathbb{Q}}
\newcommand{\bfx}{{\boldsymbol{x}}}
\newcommand{\bfh}{{\boldsymbol{h}}}
\newcommand{\bfs}{{\boldsymbol{s}}}
\newcommand{\bfm}{{\boldsymbol{m}}}
\newcommand{\bff}{{\boldsymbol{f}}}
\newcommand{\bfb}{{\boldsymbol{b}}}
\newcommand{\bfw}{{\boldsymbol{w}}}
\newcommand{\bfz}{{\boldsymbol{z}}}
\newcommand{\bfu}{{\boldsymbol{u}}}
\newcommand{\bfell}{{\boldsymbol{\ell}}}
\newcommand{\bfn}{{\boldsymbol{n}}}
\newcommand{\bfd}{{\boldsymbol{d}}}
\newcommand{\bfbeta}{{\boldsymbol{\beta}}}
\newcommand{\bfzeta}{{\boldsymbol{\zeta}}}
\newcommand{\bfnu}{{\boldsymbol{\nu}}}

\newcommand{\curvz}{{\bf \mathpzc{z}}}
\newcommand{\curvx}{{\bf \mathpzc{x}}}
\newcommand{\curvi}{{\bf \mathpzc{i}}}
\newcommand{\curvs}{{\bf \mathpzc{s}}}
\newcommand{\blip}{\mathbb{B}_1}

\newcommand{\BM}{\mbox{BM}}

\newcommand{\tac}{\mbox{\scriptsize{AC}}}
\newcommand{\beginsec}{
\setcounter{lemma}{0} \setcounter{theorem}{0}
\setcounter{corollary}{0} \setcounter{definition}{0}
\setcounter{example}{0} \setcounter{proposition}{0}
\setcounter{condition}{0} \setcounter{assumption}{0}
\setcounter{remark}{0} }

\numberwithin{equation}{section} \numberwithin{lemma}{section}

\begin{frontmatter}
\title{Large deviations for Multidimensional State-Dependent  Shot Noise Processes}

 \runtitle{Large deviations for Poisson shot noise process}

\begin{aug}
\author{ Amarjit Budhiraja and Pierre Nyquist
\\ \ \\
}
\end{aug}


\skp

\begin{abstract}
Shot noise processes are used in applied probability to model a variety of physical systems in, for example, teletraffic theory, insurance and risk theory and in the engineering sciences. In this work we prove a large deviation principle for the sample-paths of a general class of multidimensional state-dependent Poisson shot noise processes. The result covers previously known large deviation results for one dimensional state-independent shot noise processes with light tails. We use the weak convergence approach to large deviations, which reduces the proof to establishing the appropriate convergence of certain controlled versions of the original processes together with relevant results on existence and uniqueness. 

\noi {\bf AMS 2010 subject classifications:} 60F10, 60G55, 60K30

\noi {\bf Keywords:} Large deviations; Poisson shot noise; Poisson random measure; variational representations
\end{abstract}

\end{frontmatter}


\def\N{\mathbb{N}}
\def\P{\mathbb{P}}
\def\F{\mathcal{F}}
\def\R{\mathbb{R}}
 \def\X{\mathbb{X}}
 \def\Y{\mathbb{Y}}
\def\S{\mathbb{S}}
\def\W{\mathbb{W}}
\def\V{\mathbb{V}}
\def\U{\mathbb{U}}
\def\C{\mathbf{C}}
\def\B{\mathcal{B}}
\def\Lexp{\mathcal{L}_{\mbox{\tiny{exp}}}}
\def\strategy{h}
\def\Prob{\mathbb{P}}	
\def\Q{\mathbb{Q}}		
\def\Eproc{\mathbb{G}}
\def\E{\mathbb{E}}			
\def\Eb{\overline{\mathbb{E}}}
\def\calR{\mathcal{R}}
\def\frakF{\mathfrak{F}}
\def\linf{\ell _{\infty}(\mathfrak{F})}
\def\lin{\ell _{\infty}}
\newcommand{\edfis}{\tilde{\mathbf{F}}}
\newcommand{\empd}{\mathbf{F}}
\newcommand{\Rinf}{\mathbb{R}^{\infty}}	
\newcommand{\M}{\mathcal{M}}	
\newcommand{\bM}{\mathbb{M}}	
\newcommand{\re}{\mathcal{H}}	
\newcommand{\G}{\mathcal{G}}	
\newcommand{\A}{\mathcal{A}}	
\newcommand{\Var}{\textrm{Var}}
\newcommand{\Exp}{\mathbb{E}}
\newcommand{\calF}{\mathcal{F}}	
\newcommand{\calE}{\mathcal{E}}	
\newcommand{\lebT}{\lambda _{T}}	
\newcommand{\lebinf}{\lambda _{\infty}}	
\newcommand{\cadlag}{c\`{a}dl\`{a}g }

\maketitle  
\section{Introduction}
\label{sec:intro}
The goal of the current work is to study large deviation results for a general family of multidimensional state-dependent shot noise processes. Shot noise processes provide a natural class of models for systems in which (some aspect of) the state of the system is determined by the arrival of shocks. A typical application is in the context of queueing systems, in which the arrival of customers can be interpreted as shocks and one is interested in, say, the current workload or cost incurred - due to performed work - by current and former customers. Another common area of application is insurance, the shocks being claims that arrive according to an underlying point process.

Due to their usefulness in describing various physical systems shot noise processes have been studied extensively, both theoretically as well as from the perspective of applications. For some general treatments of this class of processes and their properties see \cite{Rice1945, Rice1977, Daley1971, DoneyObrien1991}; various asymptotic properties of shot noise are found in \cite{Lane1984, LundMcCXiao, McCormick1997, Samorodnitsky98, KMS2003}, the last three of which deal with heavy-tailed phenomena. An example of shot noise processes in the queueing context is provided in \cite{Huffer1987}, whereas \cite{KM1995, KMS2003} considers applications to insurance and risk theory. Another type of application is to storage processes, see, e.g., \cite{Lund1996, BrockResTw}.

In this paper we are concerned with Poisson shot noise processes, that is the underlying point process governing the arrivals of the shocks is a Poisson process. Large deviations for a family of Poisson shot noise processes have been studied in \cite{GaneshMacciTorrisi2005} and the precise result therein is as follows.
Let $N$ be a homogeneous Poisson process with unit intensity  and let $Z_1, Z_2,\dots$, be independent and identically distributed $\X$-valued random variables, each with distribution $\nu$ and independent of $N$. Here $\X$ is some locally compact Polish space.  For a function $H : \R_+ \times  \X \to \R_+$, referred to as the shot shape, consider the Poisson shot noise process $\{ X(t); t\in [0,T]\}$, defined as
\begin{align}
\label{eq:procX}
	X(t) = \sum_{n = 1}^{N(t)} H(t-T_n, Z_n),
\end{align}
where $T_1, T_2, \ldots $ are jump instants of $N$. Suppose that 
\begin{equation}
	\mbox{ for each } z \in \X, \  t\mapsto H(t,z) \mbox{ is nondecreasing and c\`{a}dl\`{a}g and } H(0,z) = 0.
	\label{eq:eq619}
\end{equation}
	 Let $h(z) = \lim_{t\to \infty}H(t,z)$.  The function $h$ is referred to as the shot value for the shot noise process $X$.
Suppose that $h$ satisfies the following condition:
\begin{equation}
	\label{eq:eq601/26}
	\mbox{ for every } \vartheta \in \R, \ \int_{\X} e^{\vartheta h(z)} \nu(dz) < \infty .
\end{equation}
For $\eps > 0$, let $X^{\eps}(t) = \eps X(\eps^{-1}t)$, $t \in [0,T]$.  
The paper
 \cite{GaneshMacciTorrisi2005} shows that $\{X^{\eps}\}_{\eps > 0}$ satisfies  a large deviation principle (LDP) in $D ([0,T] : \RR_+)$ as $\eps \to 0$, where $D ([0,T] : \RR_+)$ denotes the space of c\`{a}dl\`{a}g functions from $[0,T]$ to $\RR_+$ which is equipped with the usual Skorohod topology.
 
The goal of this work is to study large deviation properties of general state-dependent multidimensional shot noise processes.  Such processes are natural models for systems where
the impact of a shot depends on the current state value of the system.  
%
%
%
%
In order to prove large deviation results we use the fact that a Poisson shot noise process can be represented as an integral with respect to a Poisson random measure. Using such representations, our work builds on certain variational formulas for functionals of a Poisson random measure \cite{BudDupMar2011} and their application to large deviations \cite{BudChenDup2013}. Rather than traditional large deviation techniques, such as those used in \cite{GaneshMacciTorrisi2005}, we  use the weak convergence approach to large deviations. With the results of \cite{BudDupMar2011, BudChenDup2013} this amounts to proving the appropriate convergence of certain controlled versions of the original process, together with the necessary existence and uniqueness results. The main advantage of the weak convergence approach is that it avoids the discretization/approximation arguments and exponential estimates typically encountered in a large deviation analysis (see e.g. \cite{Dembo98}). Such approximation methods are used extensively in \cite{GaneshMacciTorrisi2005} and in general are  difficult to implement for complex settings such as the state-dependent shot noise processes considered here. 

Large deviation results such as those considered in this work can be used to determine the most likely path to rare events. In applications of shot noise processes these results can thus be used to try and prevent unwanted behaviour of the system in question. Moreover, large deviation results can be used to design efficient Monte Carlo algorithms. For an example of such simulations in the context of shot noise processes see \cite{Torrisi2004}.
Applications of our results to rare-event simulation problems will be studied in our future work.


We now introduce the   multidimensional state-dependent shot noise processes that will be studied in this work.  Let
for each $\eps > 0$, $H_{\eps}: \R_+ \times \X \times \R_+^d \to \R_+^d$ be a measurable function and  consider the stochastic
process $\tilde X^{\eps}$ given as the solution of the following equation,
\begin{equation}
	\label{eq:eq609/26}
		\ti X^{\eps}(t) = \sum_{n = 1}^{N(t)} H_{\eps}(t-T_n, Z_n, \tilde X^{\eps}(T_n-)), \; t \ge 0.
\end{equation}
Let $X^{\eps}(t) = \eps \ti X^{\eps}(\eps^{-1}t)$.  We will give a sufficient condition on the collection of maps
$\{H_{\eps}\}_{\eps > 0}$ under which $\{X^{\eps}\}_{\eps > 0}$ satisfies a LDP in $D([0,T]: \RR _+ ^d)$.  It will be convenient to work
with the following, equivalent in law, representation for $X^{\eps}$.
Let $\bfn_{\eps}$ be a Poisson random measure (PRM) on $\X_T = [0,T]\times \X$ with intensity $\eps^{-1}\nu_T = \eps^{-1}\lambda \otimes \nu$
where $\lambda$ is the Lebesgue measure on $[0,T]$.  Let $X^{\eps}$ solve the equation
\begin{equation}
	\label{eq:eq627/26}
	X^{\eps}(t) = \eps\int_{\X_t} H_{\eps}(\eps^{-1}(t-s), z, \eps^{-1}X^{\eps}(s-)) \bfn_{\eps}(ds\, dz),\; t \in [0, T].
\end{equation}
where for $t \in [0,T]$, $\X_t = [0,t]\times \X$.
It is easy to check that $X^{\eps}$ defined in \eqref{eq:eq627/26} and $\eps \ti X^{\eps}(\eps^{-1}\cdot)$, where $\ti X^{\eps}$ is
as in \eqref{eq:eq609/26}, have the same distribution. 
We will in fact consider a more general setting in that we will study the collection $\{X^{\eps}\}_{\eps > 0}$ given as the solution of the equation \eqref{eq:eq627/26} where the measure $\nu$ describing the intensity of $\bfn_{\eps}$
is a general $\sigma$-finite measure on $(\X, \B(\X))$. This allows for a non-integrable number of shocks on a bounded time interval.
One can formulate general sufficient conditions under which \eqref{eq:eq627/26} has a unique pathwise solution.  We will instead take unique solvability of the
equation as one of our  basic assumptions (see Condition \ref{cond:uniqsolv}). In Section \ref{sec:two-1} we introduce our assumptions (Conditions \ref{cond:uniqsolv} and
\ref{cond:mainonh}) on $H_{\eps}$. Our main result (Theorem \ref{thm:main}) shows that under these conditions, $X^{\eps}$ given as the solution
of the stochastic integral equation \eqref{eq:eq627/26b} satisfies a LDP in $D([0,T]: \R_+^d)$ as $\eps \to 0$.  
The LDP established in \cite{GaneshMacciTorrisi2005} is an immediate consequence of Theorem \ref{thm:main}.  

The rest of the paper is organized as follows.  
Section \ref{sec:three} contains the proof of wellposedness of an ODE associated
with the asymptotics of controlled analogues of \eqref{eq:eq627/26} (Theorem \ref{thm:odewell}).
In Section \ref{sec:four} we recall a result from \cite{BudDupMar2011} that gives a general sufficient condition (Condition \ref{cond:gensuff})
for a LDP to hold for measurable functionals of PRM.  Theorem \ref{thm:main} is proved by verifying this sufficient condition.  Part (a) of the condition
is verified in Section \ref{sec:five-1} while part (b)  is considered in Section \ref{sec:five-2}.

The following notation will be used.
The space of probability measures on a Polish space $\SSS$, equipped with the topology of weak convergence, will be denoted by $\clp(\SSS)$.  
For a function $f: [0,T] \to \RR^k$, set $\|f\|_{*,t} = \sup_{0\le s \le t}\|f(s)\|$, $t \in [0,T]$.  
For $\vartheta \in \clp(\SSS)$ and a $\vartheta$ -integrable $f$ on $\SSS$, we denote $\int_{\SSS} f(x) \vartheta(dx)$ as $\langle f, \vartheta\rangle$.
The Borel $\sigma$-field on a Polish space $\SSS$ will be denoted as $\clb(\SSS)$. The space of functions that are right-continuous with left limits (RCLL) from $[0, \infty)$ [resp. $[0,T]$] to $\SSS$
will be denoted as $D([0,\infty) : \SSS)$ [resp. $D([0,T]: \SSS)$] and are equipped with the usual Skorohod topology.
For a bounded function $f$ from $\SSS$ to $\RR$, we denote $\|f\|_{\infty} = \sup_{x \in \SSS}|f(x)|$.
Convergence of a sequence $\{X_n\}$ of $\SSS$-valued random variables in distribution to $X$ will be written as $X_n \Rightarrow X$.

For a $\sigma$-finite measure $\nu$ on a Polish space $\SSS$, $\cll^p_{\RR^k}(\SSS, \nu)$ will denote the  space of $p$-integrable functions, with respect to $\nu$, from $\SSS$ to $\RR^k$.  When $k=1$, we will merely write $\cll^p(\SSS, \nu)$
or $\cll^p(\nu)$.
We will usually denote by $\kappa, \kappa_1, \kappa_2, \cdots$, the constants that appear in various estimates within a proof.  The values of these constants may change from one proof to another.

\section{Main Result.}
\label{sec:two}
Our basic collection of stochastic processes $\{X^{\eps}\}_{\eps > 0}$ is given in terms of Poisson random measures $\{\bfn_{\eps}\}_{\eps > 0}$.  We will like all these Poisson random measures 
to be defined on a common probability space.  For this, the following construction will be useful.
Let $\mathbb{Y}=\mathbb{X}\times \lbrack 0,\infty )$ and $\mathbb{Y}%
_{T}=[0,T]\times \mathbb{Y}$. 
For a locally compact Polish space $\Z$, let $\mathcal{M}_{FC}(\mathbb{Z})$ be the space of all measures $\nu$ on $(\mathbb{Z}, \mathcal{B}(%
\mathbb{Z}))$ such that $\nu(K) < \infty$ for every compact $K$ in $\mathbb{Z}$.  This space is equipped with the usual topology of vague convergence.
Let $\bar{\mathbb{M}}=\mathcal{M}_{FC}(\mathbb{Y}_{T})$ and let $\bar{\mathbb{P}}$ be the unique probability measure on $(%
\bar{\mathbb{M}},\mathcal{B}(\bar{\mathbb{M}}))$ under which the canonical
map, $\bar{N}:\bar{\mathbb{M}}\rightarrow \bar{\mathbb{M}},\ \bar{N}(m) =
m $, is a Poisson random measure with intensity measure $\bar{\nu}%
_{T} = \lambda\otimes \nu \otimes \lambda _{\infty }$, where $\lambda
_{\infty }$ is the Lebesgue measure on $[0,\infty )$. The corresponding expectation
operator will be denoted by $\bar{\mathbb{E}}$. Let $\mathcal{F}_{t} =
\sigma \{\bar{N}((0,s]\times A):0\leq s\leq t,A\in \mathcal{B}(\mathbb{Y})\}$%
, and let $\bar{\mathcal{F}}_{t}$ denote the completion under $\bar{\mathbb{P%
}}$. We denote by $\bar{\mathcal{P}}$ the predictable $\sigma $-field on $%
[0,T]\times \bar{\mathbb{M}}$ with the filtration $\{\bar{\mathcal{F}}%
_{t}:0\leq t\leq T\}$ on $(\bar{\mathbb{M}},\mathcal{B}(\bar{\mathbb{M}}))$.

For $\eps > 0$, let $N^{\eps^{-1}}$ be a counting process on $\X_T$ defined as
\begin{equation}
N^{\eps^{-1} }((0,t]\times U)=\int_{(0,t]\times U \times \R_+}1_{[0,\eps^{-1}]}(r)\bar{N}(ds\, dx\, dr),\quad t\in \lbrack 0,T], \ U\in
\mathcal{B}(\mathbb{X}).  \label{Eqn:nocontrol}
\end{equation}
Clearly $N^{\eps^{-1}}$ has the same distribution as $\bfn_{\eps}$, i.e. it is a PRM on $\X_T = [0,T]\times \X$ with intensity $\eps^{-1}\lambda \otimes \nu$.

Next we introduce our assumptions and present the main result.
\subsection{Assumptions.}
\label{sec:two-1}
Our first assumption is on the unique solvability of \eqref{eq:eq627/26}.  
Note that $N^{\eps^{-1}}$ is a $\mathbb{M}$-valued random variable, where $\mathbb{M} = \mathcal{M}_{FC}(\mathbb{X}_T)$. 
\begin{condition}
	\label{cond:uniqsolv}
	For each $\varepsilon>0$ there is a measurable map ${\mathcal{G}}^{\varepsilon}:\MM \to D([0,T]: \R_+^d)$
	such that for any probability space $(\tilde{\Omega},\tilde{\mathcal{F}%
	},\tilde{P})$ on which is given a 
	Poisson random measure ${\tilde{\mathbf{n}}}_{\varepsilon}$ on $\X_{T}$ with intensity measure $\varepsilon
	^{-1}\nu_{T}$, ${X}^{\varepsilon}= {\mathcal{G}}^{\varepsilon}
	(\varepsilon\tilde{\mathbf{n}}_{\varepsilon})$ is a $\tilde{\mathcal{F}}%
	_{t}=\sigma\{\tilde{\mathbf{n}}_{\varepsilon}(B\times\lbrack0,s]),s\leq
	t,B\in\mathcal{B}(\X),\nu(B)<\infty\}$-adapted c\`{a}dl\`{a}g process that is the
	unique solution of the stochastic integral equation
	\eqref{eq:eq627/26}. 
\end{condition}
Condition \ref{cond:uniqsolv} is satisfied quite generally.  For example, if $\nu$ is a finite measure, the unique solvability
is immediate from a recursive construction of a solution of \eqref{eq:eq627/26} from one jump to the next.  For more general $\nu$, Condition \ref{cond:uniqsolv} will hold under suitable Lipschitz and growth assumptions on $H_{\eps}$ (cf. Theorem III.2.3.2 of
\cite{JacodShiryaev}).
The condition in particular says that  $X^{\varepsilon}={\mathcal{G}}^{\varepsilon}(\varepsilon
N^{\varepsilon^{-1}})$ is the unique solution of
\begin{equation}
	\label{eq:eq627/26b}
	X^{\eps}(t) = \eps\int_{\X_t} H_{\eps}(\eps^{-1}(t-s), z, \eps^{-1}X^{\eps}(s-)) N^{\eps^{-1}}(ds\, dz),\; t \in [0, T].
\end{equation}
on $(\bar{\mathbb{M}},\mathcal{B}(\bar{\mathbb{M}}), \bar{\P})$.  For the rest of this work $X^{\eps}$ will denote
the solution of \eqref{eq:eq627/26b}.

Next, we introduce our second main assumption on the family $\{H_{\eps}\}_{\eps > 0}$.
We denote by $\Lexp$ the family of all measurable functions $r: \X \to \R_+$ such that whenever $A \in \B(\X)$ is such that $\nu(A)< \infty$,
$$\int_{A} e^{\vartheta r(z)} \nu(dz) < \infty, \; \mbox{ for all } \vartheta \in \R.$$
Note that if $\nu$ is a probability measure this condition merely says that $r(Z)$ has an everywhere finite moment generating
function, where $Z$ is a random variable with probability distribution $\nu$.
\begin{condition}
	\label{cond:mainonh}
	There are measurable functions $\bar H$ and $R_{\eps}$ from $\R_+\times \X \times \R_+^d$ to $\R_+^d$
	and $\R^d$ respectively;  $\vs_{\eps}, \vs$ from $\X$ to $\R_+$; and  $h$ from $\X \times \R_+^d$ to $\R_+^d$ such that the following hold.
	\begin{enumerate}[(a)]
		\item For $\eps > 0$ and $(t,z,x) \in \R_+\times \X \times \R_+^d$
		$$H_{\eps}(t,z,x) = \bar H(t,z, \eps x) + R_{\eps}(t,z, \eps x).$$
		\item For  $\eps > 0$ and $(z,x) \in  \X \times \R_+^d$
		$$\sup_{t\ge 0} \|R_{\eps}(t,z,x)\| \le \vs_{\eps}(z)(\|x\| + 1).$$
		\item $\vs_{\eps} \le \vs$, $\vs \in \Lexp \cap \cll^1(\nu)$ and for $\nu$ a.e. $z \in \XX$, $\vs_{\eps}(z)\to 0$ as $\eps \to 0$.
		\item For $(z,x) \in  \X \times \R_+^d$, $t \mapsto \bar H(t,z,x)$ is \cadlag and non-decreasing (coordinate-wise) and $\bar H(0, z,x) = 0$.
		\item For every $z \in \X$ and $m > 0$, $\sup_{\|x\| \le m} |\bar H(t,z,x) - h(z,x)\| \to 0$ as $t \to \infty$.
		\item For some $L_h \in \Lexp \cap \mathcal{L}^1(\nu)$, 
		$$\|h(z,x) - h(z,x')\| \le L_h(z) \|x-x'\|, \mbox{ for all } x,x' \in \R_+^d, \ z \in \X.$$
		\item For some $M_h \in \Lexp \cap \mathcal{L}^1(\nu)$
		$$\|h(z,x)\| \le M_h(z) (1+ \|x\|), \mbox{ for all } x \in \R_+^d, \ z \in \X.$$
	\end{enumerate}
\end{condition}


The setting considered in \cite{GaneshMacciTorrisi2005} corresponds to the case where $H_{\eps}(t,z,x)$ is independent of
$\eps$ and $x$; in particular $R_{\eps}\equiv 0$.  Conditions \ref{cond:uniqsolv} and \ref{cond:mainonh} will be standing assumptions for this work and will not be explicitly mentioned in the statements of results.

\subsection{Controlled ODE.}
\label{sec:contode}
In this section we will consider an ODE that arises in the asymptotic analysis of the controlled analogues of  \eqref{eq:eq627/26b}.
Define $l:[0,\infty
)\rightarrow \lbrack 0,\infty )$ by
\begin{equation*}
l(r)=r\log r-r+1,\quad r\in \lbrack 0,\infty ).
\end{equation*}%
For $g:\X_T \to [0, \infty)$, let
\begin{equation}
L_{T}(g )=\int_{\mathbb{X}_{T}}l(g (t,z))\nu _{T}(dt\, dz).
\label{Ltdef}
\end{equation}%
\begin{equation}
S^{n}=\left\{ g:\mathbb{X}_{T}\rightarrow \lbrack 0,\infty ):L_{T}(g)\leq
n\right\} .  \label{eqn:SN}
\end{equation}%
A function $g\in S^{n}$ can be identified with a measure $\nu _{T}^{g}\in
\mathbb{M}$, defined by
\begin{equation*}
\nu _{T}^{g}(A)=\int_{A}g(s,x)\nu _{T}(ds\, dx),\quad A\in \mathcal{B}(\mathbb{X%
}_{T}).
\end{equation*}%
This identification induces a topology on $S^{n}$ under which $S^{n}$ is a
compact space. (See \cite{BudChenDup2013} for a proof.)
Let $S = \cup_{n \ge 1} S^n$.
For $g \in S$ consider the integral equation
\begin{equation}
	\label{eq:eqcontode}
	\xi(t) = \int_{\X_t} h(z,\xi(s)) g(s,z) \nu(dz) ds, \; t \in [0, T].
\end{equation}
The following result says that the above integral equation has a unique solution for every $g \in S$. Proof is given in Section \ref{sec:three}.
\begin{theorem}
	\label{thm:odewell}
For every $g \in S$ there is a unique $\xi \in C([0,T]: \R_+^d)$ that solves the integral equation \eqref{eq:eqcontode}.
\end{theorem}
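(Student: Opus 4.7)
The plan is to prove both existence and uniqueness through a Picard iteration, with the crucial a priori estimates supplied by the Fenchel--Young inequality associated with $l$. Since $l^\ast(x) = e^x - 1$, for any $a, b \ge 0$ and $\sigma > 0$ we have
\[
ab \le \sigma^{-1} l(b) + \sigma^{-1}(e^{\sigma a} - 1),
\]
which I would use to convert the entropic constraint $L_T(g) \le n$ into ordinary $\cll^1([0,T])$ integrability of $s \mapsto \int_\X F(z) g(s,z) \nu(dz)$ for appropriate $F$.

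As a preliminary step, I would verify that $\int_\X (e^{\sigma F(z)} - 1)\, \nu(dz) < \infty$ for every $\sigma > 0$ and every $F \in \Lexp \cap \cll^1(\nu)$. On $\{F \le 1\}$ the elementary bound $e^{\sigma F} - 1 \le C_\sigma F$ combined with $F \in \cll^1(\nu)$ gives integrability; on $\{F > 1\}$ Markov's inequality yields $\nu(\{F > 1\}) \le \int F\, d\nu < \infty$, so the $\Lexp$ property applies to give $\int_{\{F > 1\}} e^{\sigma F}\, d\nu < \infty$. Combining the Fenchel--Young inequality with Fubini then yields, for every $g \in S^n$ and every such $F$,
\[
\int_0^T \!\!\int_\X F(z) g(s,z) \nu(dz)\, ds \le \frac{n}{\sigma} + \frac{T}{\sigma} \int_\X (e^{\sigma F(z)} - 1)\, \nu(dz) < \infty.
\]
Applied to $F = M_h$ and $F = L_h$ from Condition \ref{cond:mainonh}(f),(g), this shows that $\Phi_M(s) := \int_\X M_h(z) g(s,z) \nu(dz)$ and $\Phi_L(s) := \int_\X L_h(z) g(s,z) \nu(dz)$ both lie in $\cll^1([0,T])$.

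For existence I would set $\xi_0 \equiv 0$ and iterate $\xi_{k+1}(t) = \int_{\X_t} h(z, \xi_k(s)) g(s,z) \nu(dz)\, ds$. The linear growth bound gives $\|\xi_{k+1}\|_{*,t} \le \int_0^t (1 + \|\xi_k\|_{*,s}) \Phi_M(s)\, ds$, and a standard induction in $k$ produces the $k$-uniform a priori bound $\|\xi_k\|_{*,T} \le \exp\bigl(\int_0^T \Phi_M\bigr)$. The Lipschitz condition then yields
\[
\|\xi_{k+1} - \xi_k\|_{*,t} \le \int_0^t \|\xi_k - \xi_{k-1}\|_{*,s}\, \Phi_L(s)\, ds,
\]
whence, by the customary iteration argument,
\[
\|\xi_{k+1} - \xi_k\|_{*,T} \le \|\xi_1\|_{*,T}\, \frac{\bigl(\int_0^T \Phi_L(s)\, ds\bigr)^k}{k!}.
\]
Summability of the right side forces $\{\xi_k\}$ to be Cauchy in $C([0,T] : \R_+^d)$ (continuity of each $\xi_k$ following from dominated convergence in the defining integral, using $M_h(z)(1+\|\xi_{k-1}\|_{*,T}) g(s,z)$ as majorant). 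The limit $\xi$ solves the equation by another application of dominated convergence with majorant $M_h(z)\bigl(1 + \sup_k \|\xi_k\|_{*,T}\bigr) g(s,z)$, and continuity of $\xi$ in $t$ follows similarly. Uniqueness uses the same Lipschitz estimate: two solutions $\xi_1, \xi_2$ satisfy $\|\xi_1 - \xi_2\|_{*,t} \le \int_0^t \|\xi_1 - \xi_2\|_{*,s}\, \Phi_L(s)\, ds$, and Gronwall's inequality applied with $\Phi_L \in \cll^1([0,T])$ forces $\xi_1 = \xi_2$. The principal subtlety is the $\sigma$-finiteness of $\nu$: one cannot bound $g$ in $\cll^1(\nu_T)$ directly from the entropic bound, and the interplay between the $\Lexp$ and $\cll^1(\nu)$ properties of $M_h$ and $L_h$ in the preliminary integrability step is what makes the scheme go through.
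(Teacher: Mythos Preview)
Your proof is correct and follows the same overall strategy as the paper: first establish that $\int_{\X_T} F(z) g(s,z)\,\nu(dz)\,ds < \infty$ for $F \in \Lexp \cap \cll^1(\nu)$ by exploiting the entropy bound, then run a fixed-point argument driven by the Lipschitz hypothesis on $h$. The technical implementations differ in two places worth noting.

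For the integrability step, the paper proves a sharper two-term estimate (Lemma~\ref{lem:tightest}) of the form ``constant times $\int_s^t f$ plus $\delta$ times $\|f\|_{*,t}$'', using both parts of Lemma~\ref{sineqs}. Your direct Fenchel--Young bound is simpler and suffices here, but the paper's lemma is reused repeatedly later (for equicontinuity in Proposition~\ref{prop:parta} and tightness in Proposition~\ref{prop:verifyb}), which explains why it is developed up front. For the fixed-point step, the paper applies Banach's contraction principle on a short interval where $\int_{\X_r} L_h(z) g(s,z)\,\nu(dz)\,ds < 1$ and then extends recursively, while you run a global Picard iteration and use the $1/k!$ summability trick; these are interchangeable textbook variants. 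Your observation that the $\sigma$-finiteness of $\nu$ is the main subtlety, handled by splitting $\{F \le 1\}$ versus $\{F > 1\}$ to combine the $\cll^1(\nu)$ and $\Lexp$ hypotheses, is exactly the point.
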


\subsection{Large Deviation Principle}
\label{sec:two-2}	
We are now ready to present our main result.  Given $g \in S$ denote by $\xi^g$ the unique solution of \eqref{eq:eqcontode}.
Define $I: D([0,T]: \R_+^d) \to [0,\infty]$
as
\begin{equation}
	\label{eq:ratefn}
	I(\phi) = \inf_{g \in S: \phi = \xi^g} \{L_T(g)\},
	\end{equation}
	where infimum over an empty set is taken to be $\infty$.
	In particular this says that $I(\phi) = \infty$ for all $\phi \in D([0,T]: \R_+^d)\setminus C([0,T]: \R_+^d)$.
	The following is our main result.
\begin{theorem}
	\label{thm:main}
	$I$ is a rate function and the collection $\{X^{\eps}\}_{\eps>0}$ satisfies a LDP in $D([0,T]: \R_+^d)$ with rate function $I$, as $\eps \to 0$.
\end{theorem}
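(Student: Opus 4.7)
The plan is to follow the weak convergence / variational approach of \cite{BudDupMar2011,BudChenDup2013}: by the variational representation for nonnegative functionals of the Poisson random measure $N^{\eps^{-1}}$, the LDP for $X^{\eps}=\clg^{\eps}(\eps N^{\eps^{-1}})$ follows once one verifies the sufficient condition to be recalled in Section \ref{sec:four} as Condition \ref{cond:gensuff}. That condition reduces to two statements: (a) the map $g\mapsto \xi^g$ is continuous on the compact level sets $S^n$ (which yields the Laplace lower bound together with the rate-function property of $I$); and (b) for any sequence of predictable controls $\varphi^{\eps}$ with $L_T(\varphi^{\eps})\le n$ and $\varphi^{\eps}\Rightarrow \varphi$, the controlled analogues
\[
X^{\eps,\varphi^{\eps}}(t)=\eps\int_{\X_t} H_{\eps}\bigl(\eps^{-1}(t-s), z, \eps^{-1} X^{\eps,\varphi^{\eps}}(s-)\bigr)\, N^{\eps^{-1}\varphi^{\eps}}(ds\,dz),
\]
obtained by replacing $N^{\eps^{-1}}$ in \eqref{eq:eq627/26b} with a controlled Poisson random measure $N^{\eps^{-1}\varphi^{\eps}}$ constructed from $\bar N$, converge in distribution to $\xi^{\varphi}$. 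Well-posedness of the candidate limits is furnished by Theorem \ref{thm:odewell}, and the candidate rate function is \eqref{eq:ratefn}.

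For part (a), suppose $g_k\to g$ in $S^n$. The exponential integrability $M_h\in\Lexp$ from Condition \ref{cond:mainonh}(g), combined with the cost bound $L_T(g)\le n$ via Young's inequality for the Legendre pair $(l,e^u-1)$, yields the uniform estimate
\[
\sup_{g\in S^n}\int_{\X_T} M_h(z)\, g(s,z)\, \nu_T(ds\,dz) < \infty,
\]
so Gronwall applied to \eqref{eq:eqcontode} gives a uniform bound on $\|\xi^{g_k}\|_{*,T}$. This bound, the Lipschitz estimate from Condition \ref{cond:mainonh}(f), and the convergence $g_k\nu_T\to g\nu_T$ let one pass to the limit in \eqref{eq:eqcontode} and identify the limit as $\xi^g$ by uniqueness (Theorem \ref{thm:odewell}). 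This yields lower semicontinuity and compactness of the sublevel sets of $I$.

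For part (b), I would decompose $H_{\eps}=\bar H+R_{\eps}$ as in Condition \ref{cond:mainonh}(a). The $R_{\eps}$-contribution to $X^{\eps,\varphi^{\eps}}$ is bounded pointwise by $\eps\int_{\X_t}\vs_{\eps}(z)(\|X^{\eps,\varphi^{\eps}}(s-)\|+1)\,N^{\eps^{-1}\varphi^{\eps}}(ds\,dz)$; the combination $\vs_{\eps}\to 0$ $\nu$-a.e., $\vs_{\eps}\le\vs\in\Lexp\cap\cll^1(\nu)$, the cost bound on $\varphi^{\eps}$, and exponential-tilting estimates for controlled PRMs show that this vanishes in probability uniformly over $[0,T]$. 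For the $\bar H$-term, I would first replace the random integrator by its compensator $\eps^{-1}\varphi^{\eps}(s,z)\nu_T(ds\,dz)$, using martingale quadratic-variation bounds derived from Condition \ref{cond:mainonh}(g) together with an a priori $L^{\infty}$ estimate on $X^{\eps,\varphi^{\eps}}$ obtained by Gronwall. After this replacement, Condition \ref{cond:mainonh}(e) lets one substitute $\bar H(\eps^{-1}(t-s), z, \cdot)$ by $h(z,\cdot)$ for $s$ bounded away from $t$; a Skorohod-representation plus Arzel\`a--Ascoli argument then shows that any subsequential limit solves \eqref{eq:eqcontode} driven by $\varphi$, so the uniqueness of Theorem \ref{thm:odewell} forces convergence of the full sequence to $\xi^{\varphi}$.

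The main obstacle lies in part (b), in the interplay between the memory-delay $\eps^{-1}(t-s)$ and the state-dependence on $X^{\eps,\varphi^{\eps}}(s-)$. Condition \ref{cond:mainonh}(e) supplies convergence of $\bar H(\cdot, z, x)\to h(z,x)$ only on bounded $x$-sets, so an a priori pathwise bound on $X^{\eps,\varphi^{\eps}}$ must be established \emph{before} the identification step; this bound in turn requires controlling the boundary layer $s\in(t-\delta,t]$ on which the shift $\eps^{-1}(t-s)$ is not yet large and the approximation of $\bar H$ by $h$ is not available. The boundary-layer estimate relies on exponential-tilting bounds combining $M_h\in\Lexp$ with the uniform cost bound on $\varphi^{\eps}$, localized to sets of finite $\nu$-measure since $\nu$ is merely $\sigma$-finite. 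Once these a priori bounds are in place, the remainder of the argument is a fairly standard application of the weak-convergence framework.
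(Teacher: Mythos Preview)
Your proposal is correct and mirrors the paper's argument closely: the paper verifies Condition \ref{cond:gensuff} via Proposition \ref{prop:parta} for part (a) and Proposition \ref{prop:verifyb} for part (b), using exactly the decomposition $H_{\eps}=\bar H+R_{\eps}$, the compensation of the controlled PRM, and the boundary-layer splitting of the time interval that you describe. One small correction: the a priori bound on the controlled process is obtained only in $L^1$ (see \eqref{eq:1131/28}), not pathwise or in $L^{\infty}$, and the paper then localizes to the events $B_{\alpha}^{\eps}=\{\|\bar X^{\eps}\|_{*,T}\le\alpha\}$ (and corresponding stopping times) in order to exploit the uniform-on-bounded-$x$-sets convergence in Condition \ref{cond:mainonh}(e).
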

\begin{remark}
	The LDP for the scalar state-independent case established in Proposition 3.1 of \cite{GaneshMacciTorrisi2005} is an immediate
	consequence of Theorem \ref{thm:main}.  To see this, note that when $d=1$ and
	$H_{\eps}(t,z,x) \equiv H(t,z)$, where $H$ is as introduced in \eqref{eq:procX}, Condition \ref{cond:uniqsolv} holds trivially.  Furthermore, under the assumptions made in \cite{GaneshMacciTorrisi2005} (specifically, \eqref{eq:eq619}),   parts (a)-(f) of Condition  \ref{cond:mainonh} are immediate and $h(z,x)\equiv h(z)$.  Finally the requirement in 
	\eqref{eq:eq601/26}, and since $\nu$ is a probability measure, implies that part (g) of Condition  \ref{cond:mainonh} holds as well in this state-independent case.  
\end{remark}

\section{Proof of Theorem \ref{thm:odewell}}
\label{sec:three}
In this section we prove Theorem \ref{thm:odewell}.  We start with the following two lemmas which will be used several times in this work.
The proof of the first lemma is standard and is omitted.
\begin{lemma}
\label{sineqs}
\begin{enumerate}[(a)]
\item For $a,b\in (0,\infty )$ and $\sigma \in \lbrack 1,\infty )$, $%
ab\leq e^{\sigma a}+\frac{1}{\sigma }\ell (b).$

\item For every $\beta >0$, there exist $\vr_{1}(\beta ),\vr_2(\beta )\in (0,\infty )$ such that $\vr _{1}(\beta ),\vr_2(\beta )\rightarrow 0$ as $\beta \rightarrow \infty $, and
\begin{equation*}
|x-1|\leq \vr_{1}(\beta )\ell (x)\mbox{ for }|x-1|\geq \beta ,x\geq 0,%
\mbox{
and }\;x\leq \vr_2(\beta )\ell (x)\mbox{ for 
}x\geq \beta > 1.
\end{equation*}
\end{enumerate}
\end{lemma}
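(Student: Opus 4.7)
The claim has two independent parts; I would handle each by elementary one-variable calculus.

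For part (a), the plan is to prove the standard Young-type inequality $ab \le \ell^{*}(a)+\ell(b)$, where $\ell^{*}(y)=e^{y}-1$ is the Legendre transform of $\ell$, and then absorb the parameter $\sigma$ by rescaling. Concretely, fix $a>0$ and $\sigma\ge 1$ and maximize
\[
f(b)=ab-\tfrac{1}{\sigma}\ell(b)=ab-\tfrac{1}{\sigma}(b\log b-b+1)
\]
over $b>0$. Since $f'(b)=a-\tfrac{1}{\sigma}\log b$, the unique critical point is $b^{*}=e^{\sigma a}$, which is a maximum because $f''<0$. A direct substitution gives $f(b^{*})=\tfrac{1}{\sigma}(e^{\sigma a}-1)\le e^{\sigma a}$, where the final inequality uses $\sigma\ge 1$. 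Hence $ab\le e^{\sigma a}+\tfrac{1}{\sigma}\ell(b)$ for every $b>0$, which is the desired bound.

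For part (b), I would simply define the sharp constants
\[
\vr_{1}(\beta)=\sup_{\substack{x\ge 0\\ |x-1|\ge\beta}}\frac{|x-1|}{\ell(x)},\qquad \vr_{2}(\beta)=\sup_{x\ge\beta}\frac{x}{\ell(x)},
\]
and show that both are finite and tend to $0$ as $\beta\to\infty$. Finiteness is immediate once $\beta$ is moderately large: on the feasible set the ratio is continuous, bounded near the (removed) singularity $x=1$, and we just need a good tail bound. The key observation is that for $x\ge 1$,
\[
\ell(x)=x\log x-x+1\ge x(\log x-1),
\]
so for any $x\ge 1+\beta$ with $\beta>e$ one has $\log x-1>0$ and
\[
\frac{x-1}{\ell(x)}\le\frac{x-1}{x(\log x-1)}\le\frac{1}{\log(1+\beta)-1},\qquad \frac{x}{\ell(x)}\le\frac{1}{\log\beta-1}.
\]
Both right-hand sides tend to $0$ as $\beta\to\infty$. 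For large enough $\beta$ the constraint set in the definition of $\vr_{1}$ reduces to $\{x\ge 1+\beta\}$ (since $|x-1|\ge\beta\ge 1$ excludes the compact interval $[0,1]$), so the displayed bounds directly control $\vr_{1}(\beta)$ and $\vr_{2}(\beta)$.

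There is no real obstacle here; the only mild care is bookkeeping for small $\beta$, where the set defining $\vr_{1}$ may also contain $[0,1-\beta]$. But on that compact set $|x-1|/\ell(x)$ is continuous and bounded (it equals $1$ at $x=0$ and is finite away from $x=1$), so $\vr_{1}(\beta)$ is finite for every $\beta>0$, and the tail estimate above shows $\vr_{1}(\beta)\to 0$, completing the proof.
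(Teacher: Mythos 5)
Your proof is correct; the paper itself omits the proof of this lemma as ``standard,'' and your argument --- optimizing $b\mapsto ab-\tfrac{1}{\sigma}\ell(b)$ to get the sharp bound $\tfrac{1}{\sigma}(e^{\sigma a}-1)\le e^{\sigma a}$ for part (a), and taking the sharp constants $\vr_1(\beta),\vr_2(\beta)$ as suprema of the ratios with the tail estimate $\ell(x)\ge x(\log x-1)$ for part (b) --- is precisely the standard elementary argument intended. The bookkeeping for small $\beta$ (the branch $[0,1-\beta]$, where $\ell$ is bounded below by $\ell(1-\beta)>0$) is handled adequately, so there is nothing to add.
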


\begin{lemma}
	\label{lem:tightest}
	Let $\vartheta \in \Lexp \cap \cll^1(\nu)$.  For every $\delta > 0$ and $n \in \mathbb{N}$, there exists $c(\delta, n,\vartheta) \in (0, \infty)$
	such that for all $\tilde \vartheta: \X \to \mathbb{R}_+$ such that $\tilde \vartheta \le \vartheta$, all measurable maps $f: [0,T] \to \mathbb{R}_+$ and $0 \le s \le t \le T$
	\begin{align}
	 \sup_{g \in S^n} \int_{ (s, t] \times \X} f(u) \tilde \vartheta(z) g(u,z) \nu(dz)\, du
	 \le c(\delta,n, \vartheta) \left(\int_{\X} \tilde \vartheta(z) \nu(dz)\right) \left(\int_s^t f(u) du\right) + \delta |f|_{*,t}.\label{eq:tightbd}
\end{align}
\end{lemma}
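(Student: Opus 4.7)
My plan is to apply a Fenchel-type Young inequality pointwise to the product $\tilde\vartheta(z) g(u,z)$, and then to extract the linear dependence on $\int \tilde\vartheta\, d\nu$ by truncating $\vartheta$ at a high level and absorbing the resulting tail into the $\delta|f|_{*,t}$ remainder. The key inequality I will use is $\sigma a b \le \ell(b) + (e^{\sigma a} - 1)$ for $a, b \ge 0$ and $\sigma > 0$, which follows from Fenchel's inequality applied to $\ell$ and its Legendre conjugate $\ell^*(y) = e^y - 1$. This is a slight sharpening of Lemma \ref{sineqs}(a); the subtraction of $1$ is essential here because $\nu(\X)$ may be infinite while $\int_\X(e^{\sigma\tilde\vartheta}-1)\,d\nu$ remains finite.

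Applied with $a = \tilde\vartheta(z)$, $b = g(u,z)$, then multiplied by $f(u)$ and integrated over $(s,t]\times\X$ against $\nu_T$, and using $L_T(g) \le n$, this yields
\[
\int_{(s,t]\times\X} f \tilde\vartheta g\, d\nu_T \le \frac{n}{\sigma}|f|_{*,t} + \frac{1}{\sigma}\Big(\int_s^t f\, du\Big)\int_\X (e^{\sigma\tilde\vartheta} - 1)\, d\nu.
\]
The elementary inequality $e^x - 1 \le xe^x$ together with $\tilde\vartheta \le \vartheta$ gives $(e^{\sigma\tilde\vartheta} - 1)/\sigma \le \tilde\vartheta e^{\sigma\vartheta}$, and for any $K > 0$,
\[
\int_\X \tilde\vartheta e^{\sigma\vartheta}\, d\nu \le e^{\sigma K}\int_\X \tilde\vartheta\, d\nu + \int_{\{\vartheta > K\}} \vartheta e^{\sigma\vartheta}\, d\nu.
\]

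The crucial ingredient is the finiteness of $\int_\X \vartheta e^{\sigma\vartheta}\, d\nu$, which I will establish by splitting $\X = \{\vartheta \le 1\} \cup \{\vartheta > 1\}$. On $\{\vartheta \le 1\}$ one has $\vartheta e^{\sigma\vartheta} \le e^\sigma \vartheta$, integrable because $\vartheta \in \cll^1(\nu)$; on $\{\vartheta > 1\}$, Chebyshev gives $\nu(\{\vartheta > 1\}) \le \int \vartheta\, d\nu < \infty$, and the bound $\vartheta e^{\sigma\vartheta} \le e^{(\sigma+1)\vartheta}$ is integrable on this finite-measure set because $\vartheta \in \Lexp$. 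Dominated convergence then yields $\eta(K) := \int_{\{\vartheta > K\}} \vartheta e^{\sigma\vartheta}\, d\nu \to 0$ as $K \to \infty$. Combining the estimates and using $\int_s^t f\, du \le T|f|_{*,t}$,
\[
\int_{(s,t]\times\X} f \tilde\vartheta g\, d\nu_T \le e^{\sigma K}\Big(\int_s^t f\, du\Big)\Big(\int_\X \tilde\vartheta\, d\nu\Big) + \Big(T\eta(K) + \tfrac{n}{\sigma}\Big)|f|_{*,t}.
\]
To finish, I will choose $\sigma = 2n/\delta$ (so $n/\sigma = \delta/2$) and then $K = K(\delta, n, \vartheta)$ large enough that $T\eta(K) \le \delta/2$; this produces \eqref{eq:tightbd} with $c(\delta, n, \vartheta) = e^{2nK/\delta}$.

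The principal difficulty is insisting on linear scaling in $\int \tilde\vartheta\, d\nu$ in the leading constant. A direct Young step produces a bound involving $\int_\X e^{\sigma\tilde\vartheta}\, d\nu$, which equals $+\infty$ whenever $\nu(\X) = +\infty$ even though the left-hand side is finite. The refinement $e^x - 1 \le xe^x$ converts this exponential integral into $\int \tilde\vartheta e^{\sigma\vartheta}\, d\nu$, and the truncation at $\{\vartheta > K\}$ separates off the nonlinear piece. Both parts of the hypothesis $\vartheta \in \Lexp \cap \cll^1(\nu)$ are essential: the $\cll^1$ assumption controls the region where $\vartheta$ is small (possibly of infinite $\nu$-measure), while $\Lexp$ controls exponential growth on the region where $\vartheta$ is large (automatically of finite $\nu$-measure).
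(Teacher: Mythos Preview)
Your proof is correct and takes a genuinely different route from the paper's. The paper proceeds by a two-level truncation: first it splits $\X$ into $\{\vartheta \le m\}$ and $\{\vartheta > m\}$, applies Lemma~\ref{sineqs}(a) only on the finite-measure tail $\{\vartheta > m\}$ (so the $e^{k\vartheta}$ term is integrable there by $\Lexp$), and then on $\{\vartheta \le m\}$ it splits again according to whether $g \le \beta$ or $g > \beta$, using Lemma~\ref{sineqs}(b) to bound $g$ by $\vr_2(\beta)\ell(g)$ on the latter set. Three parameters $(k,m,\beta)$ are tuned in sequence. Your argument instead applies a single sharpened Young inequality $\sigma ab \le \ell(b) + e^{\sigma a}-1$ globally; the subtraction of $1$ is exactly what lets you integrate over all of $\X$ even when $\nu(\X)=\infty$, and the elementary bound $e^x-1\le xe^x$ then recovers the linear factor $\int\tilde\vartheta\,d\nu$ directly. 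Only two parameters $(\sigma,K)$ are needed, and Lemma~\ref{sineqs}(b) is not used at all. Your approach is somewhat cleaner and makes the role of the two hypotheses on $\vartheta$ (the $\cll^1$ part controlling $\{\vartheta\le 1\}$, the $\Lexp$ part controlling the finite-measure set $\{\vartheta>1\}$) especially transparent; the paper's approach, on the other hand, stays closer to the basic toolkit of Lemma~\ref{sineqs} and avoids needing the sharper form of the Fenchel inequality.
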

\begin{proof}
	Let $f:[0,T] \to \mathbb{R}_+$, $g \in S^n$ and $\tilde \vartheta, \vartheta$ be as in the statement of the lemma.  Then for each $m > 0$
	$$
			\int_{ (s, t]\times \X} f(u) \tilde \vartheta(z) g(u,z) \nu(dz)\, du = T_1(m) + T_2(m),
	$$
	where
	$$
	T_1(m) = \int_{ (s, t] \times \{\vartheta \le m\}} f(u) \tilde \vartheta(z) g(u,z) \nu(dz)\, du,$$
	and
	$$
	T_2(m) = \int_{(s, t] \times \{\vartheta > m\}} f(u) \tilde \vartheta(z) g(u,z) \nu(dz)\, du.$$
	Using Lemma \ref{sineqs}(a), we can estimate $T_2(m)$, for each $k \ge 1$, as
	\begin{align*}
		T_2(m) \le |f|_{*,t} \left( T\int_{\{\vartheta > m\}} e^{k \vartheta(z)} \nu(dz) +  \frac{n}{k}\right).
	\end{align*}
	For each $\beta > 1$, define the sets $E_1(m,\beta)$ and $E_2(m,\beta)$ by
	\begin{align*}
	E_1(m,\beta) &= \{(u,z) \in (s,t] \times  \X: \vartheta(z) \le m \mbox{ and } g(s,z) \le \beta\},\\
	E_2(m,\beta) &= \{(u,z) \in (s,t] \times  \X: \vartheta(z) \le m \mbox{ and } g(s,z) > \beta\}.
	\end{align*}
	Then, $T_1(m)$ can be estimated as 
	$$T_1(m) \le T_3(m,\beta) + T_4(m,\beta),$$
	where
	\begin{align*} 
	T_3(m,\beta) = \int_{E_1(m,\beta)} f(u) \tilde \vartheta(z) g(u,z) \nu(dz)du, \; T_4(m,\beta) = \int_{E_2(m,\beta)} f(u) \tilde \vartheta(z) g(u,z) \nu(dz)du.
\end{align*}
	Using Lemma \ref{sineqs}(b)
	$$T_3(m,\beta) + T_4(m,\beta) \le \beta \left(\int_{\X} \tilde \vartheta(z) \nu(dz)\right) \left(\int_s^t f(u) du\right) + \vr_2(\beta) m n|f|_{*,t}.$$
	Combining the estimates for $T_1(m)$ and $T_2(m)$, the left side of \eqref{eq:tightbd} can be bounded by 
	\begin{align*}
	&  \beta \left(\int_{\X} \tilde \vartheta(z) \nu(dz)\right) \left(\int_s^t f(u) du\right)\\
		&\quad + |f|_{*,t} \left( \vr_2(\beta)
		 m n + T\int_{\{\vartheta > m\}} e^{k \vartheta(z)} \nu(dz) +  \frac{n}{k}\right).
	\end{align*}
	Now given $\delta > 0$, choose $k > 1$ such that $\frac{n}{k} < \frac{\delta}{3}$.
	Next, since $\vartheta \in \Lexp \cap \cll^1(\nu)$, it is possible to choose $m > 0$ such that $T \int_{\{\vartheta > m\}} e^{k \vartheta(z)} \nu(dz) < \frac{\delta}{3}$.
	Finally using Lemma \ref{sineqs}(b) choose $\beta > 1$ such that
	$\vr_2(\beta)
	 m n < \frac{\delta}{3}$.  The result now follows on taking $c(\delta, n,\vartheta) = \beta$.
\end{proof}

{\bf Proof of Theorem \ref{thm:odewell}.}
We will use Banach's fixed point theorem. Fix $n \in \N$ and $g \in S^n$.  Define for $r> 0$,
$T^r: C([0,r]: \R_+^d) \to C([0,r]: \R_+^d)$ as
$$
T^r(\phi)(t) = \phi(0)+ \int_{\X_t} h(z,\phi(s)) g(s,z) \nu(dz) ds, \; t \in [0,r], \, \phi \in C([0,r]: \R_+^d).$$
Note that the right-hand side indeed defines an element of $C([0,r]: \R_+^d)$ since by
Lemma \ref{lem:tightest}, for $\delta > 0$ and $0 \le s < t < r$, 
$$
\int_{(s,t]\times \X} \|h(z,\phi(u))\| g(s,u) \nu(dz) du \le (1 + \|\phi\|_{*,r})\left(c(\delta, n, M_h) (t-s) \int_{\X} M_h(z) \nu(dz) + \delta\right).$$
We will now argue that for $r$ small enough $T^r$ is a contraction.
Note that for $\phi, \ti \phi \in C([0,r]: \R_+^d)$, with $\phi(0) = \tilde \phi(0)$,
\begin{align*}
	\|T^r(\phi) - T^r(\ti \phi)\|_{*,r} & \le \int_{\X_r} \|h(z,\phi(s)) - h(z,\ti \phi(s))\|g(s,z) \nu(dz) ds\\
	&\le \|\phi - \ti \phi\|_{*,r} \int_{\X_r} L_h(z) g(s,z) \nu(dz) ds.
\end{align*}
Using Lemma \ref{lem:tightest} again and our assumption on $L_h$ we have that  $\int_{\X_T} L_h(z) g(s,z) \nu(dz) ds < \infty$. 
Thus  for $r$ sufficiently small $\int_{\X_r} L_h(z) g(s,z) \nu(dz) ds < 1$
and consequently $T^r$ is a contraction and so by Banach's fixed point theorem has a unique fixed point.
This shows that there is a unique solution to \eqref{eq:eqcontode} for all $t \in [0,r]$.  
The result now follows by a recursive argument.
\qed 

\section{A General Sufficient Condition for LDP.}
\label{sec:four}
We now present a result from \cite{BudDupMar2011} which will be a key ingredient in our proofs.  We begin with some notation.
Let $\bar{\mathcal{A}}$ be the class of all $(\bar{\mathcal{P}}\otimes
\mathcal{B}(\mathbb{X}))/\mathcal{B}[0,\infty )$-measurable maps $\varphi :%
\mathbb{X}_{T}\times \bar{\mathbb{M}}\rightarrow \lbrack 0,\infty )$; as is common we frequently suppress in the notation the dependence of $\varphi$ on elements in (the probability space) $\bar \bM$. For $%
\varphi \in \bar{\mathcal{A}}$, define a counting process $N^{\varphi }$ on $%
\mathbb{X}_{T}$ by
\begin{equation}
N^{\varphi }((0,t]\times U)=\int_{(0,t]\times U\times \R_+}1_{[0,\varphi (s,z)]}(r)\bar{N}(ds\, dz\, dr),\quad t\in \lbrack 0,T], \ U\in
\mathcal{B}(\mathbb{X}).  \label{Eqn:control}
\end{equation}%
$N^{\varphi }$ can be interpreted as a controlled random measure, with $\varphi$
playing the role of the control which selects the intensity for the points at location $x$ and time $s$, in a
possibly random but non-anticipating way.
Let
\begin{equation*}
\clu ^{n}=\{\varphi \in \bar{\mathcal{A}}:(s,z)\mapsto \varphi(s,z,\om) \in S^{n}, \ \bar{%
\mathbb{P}}\ a.e.\ \om\}.
\end{equation*}
Elements of $\clu ^n$ will be regarded as $S^n$-valued random variables where the topology on the latter  space is as introduced below \eqref{eqn:SN}.
Let $\left\{ K_{m}\subset \mathbb{X},m=1,2,\ldots \right\} $ be an
increasing sequence of compact sets such that $\cup _{m=1}^{\infty }K_{m}=%
\mathbb{X}$. For each $m$ let
\begin{eqnarray*}
\bar{\mathcal{A}}_{b,m}&=& \left\{ \varphi \in \bar{\mathcal{A}}: \mbox{ for all }(t,\omega )\in [0,T]\times \mathbb{\bar{M}}\mbox{, }m\geq \varphi
(t,x,\omega )\geq 1/m\mbox{ if }x\in K_{m}\right. \\
&& \hspace{1in} \left. \mbox{ and }\varphi (t,x,\omega )=1\mbox{ if }x\in
K_{m}^{c}\right\} ,
\end{eqnarray*}%
and let $\bar{\mathcal{A}}_{b}=\cup_{m=1}^{\infty }\bar{\mathcal{A}}_{b,m}$. Define $\tilde{\mathcal{U}}^n=\mathcal{U}^n\cap \bar{\mathcal{A}}_{b}$.

Let $\U$  be a Polish space. The following condition is a slight modification of a condition introduced in Section 4 in \cite{BudDupMar2011} to establish a large deviation result; see Section 2.2 in \cite{BudChenDup2013}.
\begin{condition}
\label{cond:gensuff} There exist measurable maps $\mathcal{G}^0$, $\clg^{\eps}$, $\eps > 0$  from $\mathbb{M}$
to $\mathbb{U}$ such that the following hold.
\begin{enumerate}[(a)]
\item For $n \in \mathbb{N}$, let $g_m, g \in S^n$ be such that $g_m
\rightarrow g$ as $m \rightarrow \infty$. Then
\begin{equation*}
\mathcal{G}^0\left(\nu_T^{g_m}\right)\rightarrow\mathcal{G}%
^0\left(\nu_T^{g}\right).
\end{equation*}

\item For $n\in \mathbb{N}$, let $\varphi _{\eps },\varphi \in \tilde{\mathcal{U}}^{n}$ be such that $\varphi_{\eps }$ converges in distribution to $\varphi $ as $\eps \rightarrow 0$. Then
\begin{equation*}
\mathcal{G}^{\eps }(\eps N^{\eps ^{-1}\varphi _{\eps
}})\Rightarrow \mathcal{G}^{0}\left( \nu _{T}^{\varphi }\right) .
\end{equation*}
\end{enumerate}
\end{condition}

For $\phi \in \mathbb{U}$, define $\mathbb{S}_{\phi }=\left\{ g\in S%
:\phi =\mathcal{G}^{0}(\nu _{T}^{g})\right\} $. Let $I:\mathbb{U}\rightarrow
\lbrack 0,\infty ]$ be defined by
\begin{equation}
I(\phi )=\inf_{g\in \mathbb{S}_{\phi }}\left\{ L_{T}(g)\right\} ,\quad \phi
\in \mathbb{U}.  \label{Eqn: I2}
\end{equation}%
By convention, $I(\phi )=\infty $ if $\mathbb{S}_{\phi }=\emptyset$.

The following theorem is a slight extension of Theorem 4.2 of \cite{BudDupMar2011}.
For a proof, we refer the reader to the Appendix of \cite{BudChenDup2013}.
\begin{theorem}
\label{Thm:LDP01} For $\eps >0$, let $Z^{\eps }$ be defined by $%
Z^{\eps }=\mathcal{G}^{\eps }(\eps N^{\eps ^{-1}})$.
If condition \ref{cond:gensuff} holds, then $I$ defined as in (\ref{Eqn: I2}) is a rate function on $\mathbb{U}$ and the family $\{Z^{\eps
}\}_{\eps >0}$ satisfies a large deviation principle with rate function $%
I$.
\end{theorem}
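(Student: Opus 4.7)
The plan is to apply the standard Dupuis--Ellis equivalence: since $\U$ is Polish, the LDP with rate function $I$ is equivalent to establishing the Laplace principle
\begin{equation*}
\lim_{\eps \to 0} -\eps \log \Eb\bigl[e^{-F(Z^\eps)/\eps}\bigr] = \inf_{\phi \in \U} \{F(\phi) + I(\phi)\}, \quad F \in C_b(\U),
\end{equation*}
together with compactness of the sub-level sets of $I$. The key tool from \cite{BudDupMar2011} is the scaled variational representation, obtained by applying the unscaled BDM formula to $G(m) := F(\clg^\eps(\eps m))/\eps$ and multiplying through by $\eps$, which yields
\begin{equation*}
-\eps \log \Eb\bigl[e^{-F(Z^\eps)/\eps}\bigr] = \inf_{\varphi \in \bar{\cla}} \Eb\Bigl[ L_T(\varphi) + F\bigl(\clg^\eps(\eps N^{\eps^{-1}\varphi})\bigr)\Bigr].
\end{equation*}

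Compactness of the sub-level sets $\{I \le M\}$ follows from Condition \ref{cond:gensuff}(a): the set $\{g \in S : L_T(g) \le M\}$ is contained in the compact space $S^{\lceil M\rceil}$, and $g \mapsto \clg^0(\nu_T^g)$ is continuous on $S^n$, so the continuous image equals $\{I \le M\}$ by the definition of $I$ and is thus compact. For the Laplace upper bound, given $\delta > 0$ I would pick $\phi^*$ within $\delta$ of the infimum of $F + I$, then $g^* \in S$ with $\clg^0(\nu_T^{g^*}) = \phi^*$ and $L_T(g^*) \le I(\phi^*) + \delta$; a truncation of $g^*$ to a sequence in the bounded class $\tilde{\clu}^n$ (cutting off outside the compact sets $K_m$ and restricting the range to $[1/m, m]$) combined with Condition \ref{cond:gensuff}(b) applied to the (deterministic) truncated controls gives $\clg^\eps(\eps N^{\eps^{-1}g^*}) \Rightarrow \phi^*$. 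Substituting these controls into the variational formula and invoking boundedness of $F$ yields the upper bound $\limsup \le F(\phi^*) + L_T(g^*) \le \inf\{F+I\} + O(\delta)$.

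For the lower bound, let $\varphi_\eps \in \bar{\cla}$ be near-minimizers in the variational representation; comparison with the trivial control $\varphi \equiv 1$ immediately forces $\Eb[L_T(\varphi_\eps)] \le 2\|F\|_\infty + \delta$. The main obstacle is then the \emph{truncation/tightness reduction}: since Condition \ref{cond:gensuff}(b) only handles $\tilde{\clu}^n$-valued controls, I must replace $\varphi_\eps$ by bounded and compactly supported controls in $\tilde{\clu}^n$ while preserving approximate optimality. Using the superlinear growth of $\ell$ (Lemma \ref{sineqs}(b)) to bound the contribution from large values of $\varphi_\eps$, and the exhaustion $\cup_m K_m = \X$ to control contributions at infinity in $\X$, this is achievable with a $o(1)$ loss. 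Once $\varphi_\eps \in \tilde{\clu}^n$, compactness of $S^n$ extracts a subsequence along which $\varphi_\eps \Rightarrow \varphi$, and Condition \ref{cond:gensuff}(b) delivers the joint convergence $(\varphi_\eps, \clg^\eps(\eps N^{\eps^{-1}\varphi_\eps})) \Rightarrow (\varphi, \clg^0(\nu_T^\varphi))$. Applying Fatou's lemma together with the lower semicontinuity of $L_T$ on $S^n$ produces the matching lower bound, completing the Laplace principle and hence the LDP. The details of the truncation step are precisely those carried out in the Appendix of \cite{BudChenDup2013} to which the theorem statement defers.
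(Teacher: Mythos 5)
Your proposal is correct and follows essentially the same route as the proof the paper relies on: the statement is exactly the variational-representation/Laplace-principle argument of Theorem 4.2 of \cite{BudDupMar2011} as extended in the Appendix of \cite{BudChenDup2013}, with Condition \ref{cond:gensuff}(a) giving compactness of level sets and the upper bound, Condition \ref{cond:gensuff}(b) plus the truncation-to-$\tilde{\mathcal{U}}^n$ reduction and lower semicontinuity of $L_T$ giving the lower bound. The minor points you leave implicit (the diagonalization over truncated deterministic controls in the upper bound and attainment of the infimum in \eqref{Eqn: I2} for the level-set argument) are handled exactly as in that reference, so no new ideas are needed.
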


\section{Proof of Theorem \ref{thm:main}}
\label{sec:five}
In order to prove Theorem \ref{thm:main} we will apply Theorem \ref{Thm:LDP01} with 
$\U = D([0,T]: \R_+^d)$,
$\clg^{\eps}$ as introduced in Condition \ref{cond:uniqsolv},
and $\clg^0: \MM \to C([0,T]: \R_+^d)$ defined as follows. Let
$\clg^0(m) = \xi^g$ if $m = \nu^g_T$ for some $g \in S$ where $\xi^g$ is as introduced above
\eqref{eq:ratefn}.  For all other $m \in \MM$ we set $\clg^0(m) = 0$. It suffices to show that Condition \ref{cond:gensuff} is satisfied with this choice of $\clg^{\eps}$ and $\clg^0$.
In Section \ref{sec:five-1} we will verify part (a) of this condition and Section \ref{sec:five-2} is devoted to the verification of part (b).
\subsection{Verification of Condition \ref{cond:gensuff}(a)}
\label{sec:five-1}
The following is the main result of this section.
\begin{proposition}
	\label{prop:parta}
	Let $n \in \N$ and $g_k, g \in S^n$, $k \ge 1$ be such that $g_k \to g$.  Then $\xi^{g_k} \to \xi^g$ in
	$C([0,T]: \R_+^d)$.
	\end{proposition}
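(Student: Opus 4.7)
The plan is to prove Proposition \ref{prop:parta} by establishing relative compactness of $\{\xi^{g_k}\}$ in $C([0,T]:\R_+^d)$ via Arzel\`a--Ascoli, identifying every subsequential limit as a solution of \eqref{eq:eqcontode} with the limiting $g$, and invoking the uniqueness in Theorem \ref{thm:odewell} to conclude that the full sequence converges to $\xi^g$.

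First, I would obtain a uniform bound $\sup_k \|\xi^{g_k}\|_{*,T} \le C_0$. From Condition \ref{cond:mainonh}(g),
$$\|\xi^{g_k}(t)\| \le \int_{\X_t} M_h(z)(1 + \|\xi^{g_k}(s)\|)\, g_k(s,z)\, \nu(dz)\, ds.$$
Applying Lemma \ref{lem:tightest} with $\tilde\vartheta = M_h$, $f(u) = 1 + \|\xi^{g_k}\|_{*,u}$, and $\delta = 1/2$ (so that the $\delta|f|_{*,t}$ term can be absorbed into the left side after taking the sup over $[0,t]$), followed by Gronwall, gives a $k$-independent $C_0$. A similar estimate with $f\equiv 1$ produces
$$\|\xi^{g_k}(t) - \xi^{g_k}(s)\| \le (1+C_0)\bigl[c(\delta,n,M_h)\|M_h\|_{\cll^1(\nu)}(t-s) + \delta\bigr]$$
for every $\delta > 0$, which yields equicontinuity and hence precompactness in $C([0,T]:\R_+^d)$.

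Next, given any subsequence along which $\xi^{g_{k_j}} \to \xi^*$, I would split
$$\xi^{g_{k_j}}(t) = \underbrace{\int_{\X_t}\bigl[h(z,\xi^{g_{k_j}}(s)) - h(z,\xi^*(s))\bigr] g_{k_j}(s,z)\,\nu(dz)\,ds}_{A_j(t)} + \underbrace{\int_{\X_t} h(z,\xi^*(s))\, g_{k_j}(s,z)\,\nu(dz)\,ds}_{B_j(t)}.$$
The Lipschitz bound in Condition \ref{cond:mainonh}(f) gives $\|A_j(t)\| \le \|\xi^{g_{k_j}} - \xi^*\|_{*,T}\int_{\X_T} L_h(z)\, g_{k_j}(s,z)\,\nu(dz)\,ds$, and Lemma \ref{lem:tightest} with $\tilde\vartheta = L_h$ and $f \equiv 1$ bounds the latter integral uniformly in $j$; thus $A_j \to 0$ in sup norm. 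For $B_j(t)$ the goal is to show $B_j(t) \to \int_{\X_t} h(z,\xi^*(s))\, g(s,z)\,\nu(dz)\,ds$ using $g_{k_j} \to g$ in $S^n$. Once this is in hand, $\xi^*$ satisfies \eqref{eq:eqcontode} with $g$, and uniqueness from Theorem \ref{thm:odewell} forces $\xi^* = \xi^g$, so the whole sequence converges.

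The hard part is the passage $B_j(t) \to B(t)$. The topology on $S^n$ is the one induced from vague convergence of the associated measures $\nu_T^{g_k}$ on $\X_T$, so it directly delivers integral convergence only for test functions in $C_c(\X_T)$, whereas $h(z,\xi^*(s))$ is merely measurable in $z$ and dominated by $(1+C_0) M_h(z)$, which is unbounded. My plan is a truncate-then-approximate argument: apply Lemma \ref{lem:tightest} with $\tilde\vartheta = M_h\mathbf{1}_{K^c}$ (still dominated by $M_h \in \Lexp \cap \cll^1(\nu)$) to make both $\sup_j\int_{[0,T]\times K^c} M_h(z)\,g_{k_j}(s,z)\,\nu(dz)\,ds$ and its analogue for $g$ arbitrarily small by choosing a compact $K \subset \X$ sufficiently large; on $[0,t]\times K$ the integrand is bounded and can be approximated, uniformly on this compact set (using continuity of $s \mapsto h(z,\xi^*(s))$ together with a standard mollification in $z$), by elements of $C_c(\X_T)$ against which the vague convergence of $\nu_T^{g_{k_j}}$ to $\nu_T^g$ passes directly. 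Combining entropy-based tightness with vague convergence is the main technical device; the remainder of the argument is bookkeeping.
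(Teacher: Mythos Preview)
Your overall strategy coincides with the paper's: establish precompactness via Lemma \ref{lem:tightest} and Gronwall, decompose along a convergent subsequence into a Lipschitz term $A_j$ and a weak-convergence term $B_j$, and invoke Theorem \ref{thm:odewell} for uniqueness. The paper packages the $A_j$/$B_j$ argument into a separate lemma (Lemma \ref{lem:cgce1050}) but the content is the same.

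The one substantive gap is in your handling of $B_j$. You claim that $(s,z)\mapsto h(z,\xi^*(s))\mathbf{1}_{[0,t]\times K}(s,z)$ can be approximated \emph{uniformly} on the compact set by elements of $C_c(\X_T)$ via ``a standard mollification in $z$''. This does not work as stated: $h$ is only assumed measurable in $z$, so uniform approximation by continuous functions is unavailable (Lusin gives at best approximation off a set of small $\nu_T$-measure), and $\X$ is an abstract locally compact Polish space with no mollification structure. The paper deals with this by truncating further to $\{M_h\le\rho\}$ (making the integrand bounded and supported on a set of finite $\nu_T$-measure) and then citing an external result (Lemma 2.8 of \cite{BoDu}; see Appendix A.6 of \cite{BudChenDup2013}) to the effect that $g_k\to g$ in $S^n$ implies $\int \psi\, g_k\, d\nu_T \to \int \psi\, g\, d\nu_T$ for every \emph{bounded measurable} $\psi$ with such support, not just for $\psi\in C_c$. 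The proof of that lemma is itself an entropy argument of the kind you allude to: approximate $\psi$ in $L^1(\nu_T)$ (not uniformly) by continuous compactly supported functions, and use the splitting $g_k = g_k\mathbf{1}_{\{g_k\le\beta\}} + g_k\mathbf{1}_{\{g_k>\beta\}}$ together with Lemma \ref{sineqs}(b) to control the approximation error uniformly in $k$. Your sketch is repaired by making exactly this substitution.
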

	The following lemma will be useful in proving the proposition.
	\begin{lemma}
		\label{lem:cgce1050}
		Let $f_k, f \in D([0,T]:\R_+^d)$, $k \ge 1$, be such that $\|f_k-f\|_{*,T} \to 0$ as $k\to \infty$.  Also let $n \in \N$ and
		 $g_k, g \in S^n$ be such that $g_k\to g$.  Then, letting
		$$\ti f_k(t) = 	\int_{\X_t} h(z,f_k(s)) g_k(s,z) \nu(dz) ds, \; \ti f(t) = 	\int_{\X_t} h(z,f(s)) g(s,z) \nu(dz) ds,$$
		$\ti f_k(t) \to \ti f(t)$ as $k \to \infty$, for every $t \in [0,T]$.
		\end{lemma}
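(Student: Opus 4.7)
The plan is to decompose
\[
\tilde f_k(t) - \tilde f(t) = A_k(t) + B_k(t),
\]
where
\[
A_k(t) = \int_{\X_t}[h(z,f_k(s))-h(z,f(s))]\,g_k(s,z)\,\nu(dz)\,ds,
\]
\[
B_k(t) = \int_{\X_t} h(z,f(s))\,[g_k(s,z)-g(s,z)]\,\nu(dz)\,ds,
\]
and to show each term vanishes as $k\to\infty$. For $A_k(t)$ the Lipschitz bound in Condition \ref{cond:mainonh}(f) gives
\[
\|A_k(t)\| \le \|f_k-f\|_{*,T}\int_{\X_T} L_h(z)\,g_k(s,z)\,\nu(dz)\,ds,
\]
and Lemma \ref{lem:tightest}, applied with $\tilde\vartheta = L_h \in \Lexp \cap \mathcal{L}^1(\nu)$ and the constant function $f\equiv 1$, bounds the integral by a constant depending only on $T$, $n$, and $L_h$. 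Since $\|f_k-f\|_{*,T}\to 0$ by hypothesis, this yields $A_k(t)\to 0$.

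For $B_k(t)$ the idea is to exploit that $g_k\to g$ in $S^n$ is, by definition, the vague convergence of the measures $\nu_T^{g_k}\to\nu_T^{g}$ in $\M_{FC}(\X_T)$, combined with the uniform integrability supplied by the entropy bound $L_T(g_k)\le n$. The (unbounded, non-compactly supported) kernel $F(s,z) := h(z,f(s))\,\mathbf{1}_{[0,t]}(s)$ should be approximated by a sequence of bounded, compactly supported functions, e.g.\ $F_{m,j}(s,z) = \bigl(h(z,f(s)) \wedge m\bigr)\mathbf{1}_{K_j}(z)\mathbf{1}_{[0,t]}(s)$, with $\{K_j\}$ a sequence of compacts exhausting $\X$. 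By Condition \ref{cond:mainonh}(g), the error satisfies $\|F-F_{m,j}\| \le M_h(z)(1+\|f\|_{*,T})\,\mathbf{1}_{E_{m,j}}(z)$ for a set $E_{m,j}$ whose $M_h$-weighted $\nu$-mass can be made arbitrarily small by choosing $m,j$ large. Lemma \ref{lem:tightest} with $\tilde\vartheta = M_h\mathbf{1}_{E_{m,j}}$ then makes the contribution of this error to $B_k(t)$ uniformly small in $k$. The remaining task is thus to show that $\int F_{m,j}\,d(\nu_T^{g_k}-\nu_T^g)\to 0$ for each fixed $m,j$.

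The main obstacle is precisely this last step: $F_{m,j}$ is bounded with compact support but not continuous on $\X_T$, because $h$ is assumed only measurable in $z$ and $f$ is only c\`adl\`ag in $s$, so vague convergence does not apply directly. The way I would address this is to further approximate $F_{m,j}$ by continuous, compactly supported $\tilde F_{m,j,\eta}$ that coincides with $F_{m,j}$ off a set of small $\nu_T$-measure, using Lusin's theorem in the $z$-variable and, in the $s$-variable, the c\`adl\`ag structure of $f$ (hence only countably many discontinuities) followed by a standard mollification. The approximation errors are again controlled uniformly in $k$ via Lemma \ref{lem:tightest} together with $L_T(g_k)\le n$. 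Vague convergence then yields $\int \tilde F_{m,j,\eta}\,d(\nu_T^{g_k}-\nu_T^g)\to 0$ as $k\to\infty$, and sending $\eta\to 0$ followed by $m,j\to\infty$ completes the proof.
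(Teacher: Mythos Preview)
Your decomposition $A_k + B_k$ and the treatment of $A_k$ match the paper exactly (its $T_1^k, T_2^k$). For $B_k$ the paper carries out the same two-step reduction---first to a compact $K\subset\X$, then to $\{M_h\le\rho\}$, both via Lemma \ref{lem:tightest}---but at the final step, rather than arguing via Lusin, it simply cites a known result (a variant of Lemma 2.8 in \cite{BoDu}; see Appendix A.6 of \cite{BudChenDup2013}) to conclude that $\int F\,(g_k-g)\,d\nu_T\to 0$ for any bounded measurable $F$ supported on a set of finite $\nu_T$-measure. Your Lusin sketch is effectively a proof of that cited lemma, so the two arguments coincide in substance; yours is more self-contained, the paper's is shorter.

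One small correction to your sketch: the Lusin approximation error is supported on a general measurable subset $E\subset\X_T$, not on a set of product form $f(u)\tilde\vartheta(z)$, so Lemma \ref{lem:tightest} does not apply to it directly. What you actually need there is the uniform integrability of $\{g_k\}$ on finite-measure sets, which follows straight from Lemma \ref{sineqs}(b) and $L_T(g_k)\le n$: for any $\beta>1$,
\[
\int_E g_k\,d\nu_T \le \beta\,\nu_T(E) + \vr_2(\beta)\,n,
\]
which is small uniformly in $k$ once $\beta$ is chosen large and then $\nu_T(E)$ is made small.
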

	\begin{proof}
	 Note that
		\begin{align}
			\int_{\X_t} h(z,f_k(s)) g_k(s,z) \nu(dz) ds - \int_{\X_t} h(z,f(s)) g(s,z) \nu(dz) ds
			= T_1^k(t) + T_2^k(t),\label{eq:eq839/27}
			\end{align}
			where
			$$T_1^k(t) = \int_{\X_t} \left[h(z,f_k(s)) - h(z, f(s))\right]  g_k(s,z) \nu(dz) ds$$
			and
			$$T_2^k(t) = \int_{\X_t}  h(z, f(s)) \left[g_k(s,z) - g(s,z)\right] \nu(dz) ds.$$
			Noting that
			\begin{align*}
				\|T_1^k\|_{*,T} \le \|f_k - f\|_{*,T} \int_{\X_T}  L_h(z) g_k(s,z) \nu(dz) ds
			\end{align*}
			and $\sup_k \int_{\X_T}  L_h(z) g_k(s,z) \nu(dz) ds < \infty$ from Lemma \ref{lem:tightest},
			we see that $\|T_1^k\|_{*,T} \to 0$ as $k\to \infty$.
			Consider now $T_2^k$.  
			We first claim that for every $\epsilon > 0$ there is a compact $K \subset \X$ such that
			\begin{equation}
				\label{eq:eq720/27}
				\sup_{\psi \in S^n} \int_{[0,T]\times K^c} M_h(z) \psi(s,z) \nu(dz) ds < \epsilon .
						\end{equation}
						To see the claim, let $\{K_{\gamma}\}_{\gamma \in \N}$ be a sequence of compact subsets of $\X$ such that $K_{\gamma}\uparrow \X$
						as $\gamma \to \infty$.  Since, $M_h \in \cll^1(\nu)$
						$$\int_{K_{\gamma}^c} M_h(z) \nu(dz) \to 0 \mbox{ as } \gamma \to \infty .$$
						Also, from Lemma  \ref{lem:tightest}, with $f(u) \equiv 1$, $\tilde \vartheta(z) = 1_{K_{\gamma}^c}(z) M_h(z)$ and
						$\vartheta(z) = M_h(z)$ we have that for every $\delta > 0$
						$$\sup_{\psi \in S^n} \int_{[0,T]\times K_{\gamma}^c} M_h(z) \psi(s,z) \nu(dz) ds
						\le c(\delta,n, M_h) T \int_{K_{\gamma}^c} M_h(z) \nu(dz) + \delta .$$
						The claim now follows on combining the above two displays.
						Using \eqref{eq:eq720/27}, for a fixed $\epsilon > 0$, choose a compact $K \subset \X$ such that
						$$T_2^k(t) = \int_{\X_t}  h(z, f(s))1_{K}(z) \left[g_k(s,z) - g(s,z)\right] \nu(dz) ds + T_3^k(t)$$
						and $\sup_{k\ge 1}\|T_3^k\|_{*,T} \le \epsilon$.
						Next, for $\rho >0$ write
						$$\int_{\X_t}  h(z, f(s))1_{K}(z) \left[g_k(s,z) - g(s,z)\right] \nu(dz) ds = T_{4,\rho}^k(t) + T_{5,\rho}^k(t)$$
						where
						$$T_{4,\rho}^k(t) = \int_{[0,t]\times K}  h(z, f(s))1_{M_h(z) \le \rho} \left[g_k(s,z) - g(s,z)\right] 
						\nu(dz) ds$$
						and
						$$T_{5,\rho}^k(t) = \int_{[0,t]\times K}  h(z, f(s))1_{M_h(z) > \rho} \left[g_k(s,z) - g(s,z)\right] 
						\nu(dz) ds$$
							From Lemma \ref{lem:tightest}, for every $\delta > 0$,
						$$\sup_k\|T_{5,\rho}^k\|_{*,T} \le 
						(1 + \|f\|_{*,T})\left(2c(\delta,n, M_h) T \int_{\X} M_h(z)1_{M_h(z)> \rho} \nu(dz) + 2\delta\right),$$
						Choose $\delta > 0$ and $\rho > 0$ such that the right-hand side of the above expression is bounded by $\epsilon$.
						A minor modification of Lemma 2.8 of \cite{BoDu} shows (see Appendix A.6 of \cite{BudChenDup2013})  that
						for every $\rho > 0$, $T_{4,\rho}^k(t) \to 0$
						as $k \to \infty$.  Combining the above estimates we have that for every $t \in [0,T]$, $\limsup_{k\to \infty}\|T_2^k(t)\| \le 2\epsilon$.
				Since $\epsilon > 0$ is arbitrary, the above implies that for every $t \in [0,T]$, $T_2^k(t) \to 0$ as $k\to \infty$.
						Thus we have shown that the expression on the left-hand side of \eqref{eq:eq839/27} converges to $0$ as $k \to \infty$,
						which proves the result.
	\end{proof}

	{\em Proof of Proposition \ref{prop:parta}.}
		Let $\xi_k = \xi^{g_k}$, $\xi = \xi^g$.  We first argue that $\{\xi_k\}_{k\ge 1}$ is pre-compact in $C([0,T]: \R_+^d)$. Note that
		\begin{align*}
			\|\xi_k(t)\| \le \int_{\X_t} (1 + \|\xi_k(s)\|)M_h(z) g_k(s,z) \nu(dz) ds, \; t \in [0, T].
			\end{align*}
		From Lemma \ref{lem:tightest} it follows that for any $\delta > 0$
		\begin{equation}\label{eq:1129/28}
		\sup_{\psi \in S^n} \int_{\X_T} M_h(z) \psi(s,z) \nu(dz) ds \le c(\delta,n, M_h)T \int_{\X}M_h(z) \nu(dz) + \delta.
	\end{equation}
		An application of Gronwall's lemma now shows that
	\begin{equation}\label{eq:eq638/27}\sup_{k\ge 1}(1+\|\xi_k\|_{*,T})  = \kappa_1 < \infty.\end{equation}
		Next, for $0 \le s \le t \le T$ and $\delta > 0$.
		\begin{align*}
			\|\xi_k(t)-\xi_k(s)\| &\le \int_{(s,t]\times \X} \|h(z,\xi_k(u))\| g_k(u,z) \nu(dz) du\\
			&\le \kappa_1 \int_{(s,t]\times \X} M_h(z) g_k(u,z) \nu(dz) du\\
			&\le \kappa_1\left(c(\delta,n, M_h)(t-s) \int_{\X}M_h(z) \nu(dz) + \delta\right).
		\end{align*}
		This shows that $\{\xi_k\}_{k\ge 1}$ is equicontinuous which together with \eqref{eq:eq638/27} proves the desired pre-compactness. 
		Suppose $\xi_k$ converges along a subsequence to $\bar\xi$.  
		From Lemma \ref{lem:cgce1050}, along this subsequence, for every $t \in [0,T]$, as $k \to \infty$, 
		$$\int_{\X_t} h(z,\xi_k(s)) g_k(s,z) \nu(dz) ds \to \int_{\X_t} h(z,\bar \xi(s)) g(s,z) \nu(dz) ds$$
						Combining this with the fact that $\xi_k$ solves 
							$$\xi_k(t) = \int_{\X_t} h(z,\xi_k(s)) g_k(s,z) \nu(dz) ds, \; t \in [0, T]$$
							for every $k \ge 1$ and that $\xi_k$ converges along the chosen subsequence to $\bar \xi$ we have that
							$$\bar \xi(t) = \int_{\X_t} h(z,\bar \xi(s)) g(s,z) \nu(dz) ds, \; t \in [0, T].$$
							By unique solvability of the above equation and the definition of $\xi$ we now see that $\bar \xi = \xi$.
							\qed

\subsection{Verification of Condition \ref{cond:gensuff}(b)}
\label{sec:five-2}
The following is the main result of this section.
\begin{proposition}
	\label{prop:verifyb}
	Let  $n\in \mathbb{N}$ and let $\varphi_{\eps},\varphi \in \tilde{\mathcal{U}}^{n}$ be such that $\varphi_{\eps}$ converges in distribution to $\varphi $ as $\eps \rightarrow 0$.  Let $\{\clg^{\eps}\}_{\eps > 0}$ be as in 
	Condition \ref{cond:uniqsolv} and $\clg^0$ be as introduced at the beginning of Section \ref{sec:five}.
	Then $
	\mathcal{G}^{\eps}(\eps N^{\eps^{-1}\varphi _{\eps}})\Rightarrow \mathcal{G}^{0}\left( \nu _{T}^{\varphi }\right) .
	$
\end{proposition}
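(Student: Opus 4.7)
The plan is to verify Condition \ref{cond:gensuff}(b) by the standard weak convergence route: establish tightness of the controlled processes $X^\eps := \mathcal{G}^\eps(\eps N^{\eps^{-1}\varphi_\eps})$, pass to the limit in a suitable decomposition of the governing equation, and invoke uniqueness from Theorem \ref{thm:odewell} to identify the limit as $\xi^\varphi$.

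The key step is to rewrite the equation for $X^\eps$ after adding and subtracting $\eps\int_{\X_t}h(z,X^\eps(s-))\,N^{\eps^{-1}\varphi_\eps}(ds\,dz)$ and using Condition \ref{cond:mainonh}(a):
\begin{align*}
X^\eps(t) &= \int_{\X_t} h(z, X^\eps(s-))\,\varphi_\eps(s,z)\,\nu(dz)\,ds + M^\eps(t) + E^\eps_{\bar H}(t) + E^\eps_R(t),\\
M^\eps(t) &= \eps\int_{\X_t} h(z, X^\eps(s-))\bigl[N^{\eps^{-1}\varphi_\eps}(ds\,dz) - \eps^{-1}\varphi_\eps(s,z)\nu(dz)\,ds\bigr],\\
E^\eps_{\bar H}(t) &= \eps\int_{\X_t}\bigl[\bar H(\eps^{-1}(t-s),z,X^\eps(s-)) - h(z,X^\eps(s-))\bigr]\,N^{\eps^{-1}\varphi_\eps}(ds\,dz),\\
E^\eps_R(t) &= \eps\int_{\X_t} R_\eps(\eps^{-1}(t-s),z,X^\eps(s-))\,N^{\eps^{-1}\varphi_\eps}(ds\,dz).
\end{align*}
The critical observation is that $M^\eps$ is a true $\bar{\mathcal{F}}_t$-martingale since $h(z,X^\eps(s-))$ is $\bar{\mathcal{P}}\otimes\mathcal{B}(\X)$-measurable, and after a standard truncation its predictable quadratic variation is of order $\eps$, whence $M^\eps \Rightarrow 0$ in $D([0,T]:\R^d)$ by Doob's inequality. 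For $E^\eps_R$, Condition \ref{cond:mainonh}(b) yields $\sup_t\|E^\eps_R(t)\|\le \eps\int_{\X_T}\vs_\eps(z)(1+\|X^\eps\|_{*,T})\,N^{\eps^{-1}\varphi_\eps}(ds\,dz)$, whose compensator is controlled via Lemma \ref{lem:tightest} applied with $\tilde\vartheta = \vs_\eps \le \vs$ and tends to $0$ in expectation by dominated convergence since $\vs_\eps \to 0$ $\nu$-a.e.

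For tightness, Condition \ref{cond:mainonh}(d)(e)(g) gives the componentwise domination $\bar H \le h$ and the pointwise estimate $\|H_\eps(\eps^{-1}(t-s),z,\eps^{-1}X^\eps(s-))\|\le (M_h(z)+\vs(z))(1+\|X^\eps(s-)\|)$, so Lemma \ref{lem:tightest} applied with $\vartheta = M_h+\vs$ together with Gronwall's inequality delivers $\sup_\eps \E\|X^\eps\|_{*,T}<\infty$; equicontinuity of the compensator term on increments $(s,t]$ follows from Lemma \ref{lem:tightest} with $f = \mathbf{1}_{(s,t]}$, and combined with the smallness of the error terms produces $C$-tightness of $\{X^\eps\}$ in $D([0,T]:\R_+^d)$. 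Passing to a joint Skorohod representation along a convergent subsequence with $(X^\eps,\varphi_\eps)\to(\bar X,\varphi)$ almost surely, an argument analogous to the proof of Lemma \ref{lem:cgce1050} (using the Lipschitz bound from Condition \ref{cond:mainonh}(f) for the $X^\eps \to \bar X$ dependence and the $M_h$ domination for the tail-in-$z$ truncation) yields, for each $t$,
\begin{equation*}
\int_{\X_t} h(z,X^\eps(s-))\,\varphi_\eps(s,z)\,\nu(dz)\,ds \longrightarrow \int_{\X_t} h(z,\bar X(s))\,\varphi(s,z)\,\nu(dz)\,ds.
\end{equation*}
Hence $\bar X$ satisfies the integral equation \eqref{eq:eqcontode} with $g=\varphi$, and Theorem \ref{thm:odewell} forces $\bar X=\xi^\varphi$; since the limit is deterministic and independent of the subsequence, $X^\eps \Rightarrow \xi^\varphi = \mathcal{G}^0(\nu_T^\varphi)$.

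The main obstacle is the control of $E^\eps_{\bar H}$, since Condition \ref{cond:mainonh}(e) supplies $\bar H(\eps^{-1}(t-s),z,x)\to h(z,x)$ only for fixed $s<t$ (and uniformly on compacts in $x$), with no control near the diagonal $s=t$. Following the splitting device used in the proof of Lemma \ref{lem:cgce1050}, I would partition the time integral at $s=t-\delta$: on $[0,t-\delta]$, one has $\eps^{-1}(t-s)\ge\eps^{-1}\delta\to\infty$, so Condition \ref{cond:mainonh}(e) applied with $m$ equal to a high-probability upper bound for $\|X^\eps\|_{*,T}$, combined with Lemma \ref{lem:tightest} and dominated convergence in $\nu$, handles the convergence; on $(t-\delta,t]$, the componentwise bound $\bar H\le h$ together with Condition \ref{cond:mainonh}(g) and Lemma \ref{lem:tightest} produce an estimate of size $\delta+o(1)$ uniformly in $t$. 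Sending $\eps\to 0$ and then $\delta\to 0$ closes the argument and simultaneously supplies the uniform-in-$t$ smallness of $E^\eps_{\bar H}$ needed for tightness.
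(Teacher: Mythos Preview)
Your proposal is correct and follows essentially the same route as the paper: the same decomposition into compensator, martingale, $\bar H - h$ error, and $R_\eps$ error; the same $t-\delta$ splitting device (the paper writes $\upsilon$) exploiting the monotonicity of $\bar H$ for the $\bar H - h$ term; tightness via Lemma \ref{lem:tightest} and Gronwall; and limit identification via the argument of Lemma \ref{lem:cgce1050} together with Theorem \ref{thm:odewell}. One correction: the limit $\xi^\varphi$ is not deterministic, since $\varphi$ is a random element of $S^n$; what makes the subsequence argument close is that along any Skorohod representation the a.s.\ limit is $\xi^{\tilde\varphi}$ with $\tilde\varphi \stackrel{d}{=} \varphi$, so the limiting \emph{law} is uniquely determined as that of $\mathcal{G}^0(\nu_T^\varphi)$---this is exactly how the paper phrases the final step.
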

\begin{proof}
	Let $\tilde{\varphi}_{\eps}=1/\varphi_{\eps}$, and
	recall that $\varphi_{\eps}\in \tilde{\mathcal{U}}^{n}$ means that $\varphi_{\eps}=1$
	off some compact set in $\XX$ and bounded above and below away from zero on the
	compact set. Then it is easy to check (see Theorem III.3.24 of \cite{JacodShiryaev},
	see also Lemma 2.3 of \cite{BudDupMar2011}) that
	\[
	\mathcal{E}_{t}^{\varepsilon} =\exp\left\{  \int
	_{\mathbb{(}0,t]\times\X \times\lbrack0,\varepsilon^{-1}\varphi_{\eps}]}%
	\log(\tilde{\varphi_{\eps}})d\bar{N}+\int_{\mathbb{(}0,t]\times\X%
	\times\lbrack0,\varepsilon^{-1}\varphi_{\eps}]}\left(  -\tilde{\varphi}_{\eps}+1\right)
	d\bar{\nu}_{T}\right\}
	\]
	is an $\left\{  \bar{\mathcal{F}}_{t}\right\}  $-martingale and consequently
	\[
	\mathbb{Q}_{T}^{\varepsilon}(G)=\int_{G}\mathcal{E}_{T}^{\varepsilon}%
	(\tilde{\varphi})d\bar{\PP},\quad\text{ for }G\in\mathcal{B(}%
	\mathbb{\bar{M}}\mathcal{\mathbb{\mathcal{)}}},%
	\]
	defines a probability measure on $\mathbb{\bar{M}}$. Furthermore,
	$\bar{\PP}$ and $\mathbb{Q}_{T}^{\varepsilon}$ are mutually absolutely
	continuous and it can be verified that under $\mathbb{Q}_{T}^{\varepsilon}
	$, $\varepsilon N^{\varepsilon^{-1}\varphi _{\varepsilon}}$ has the same law as that of
	$\varepsilon N^{\varepsilon^{-1}}$ under $\bar{\PP}$. Thus from Condition \ref{cond:uniqsolv} it follows
	that $\bar{X}^{\varepsilon}={\mathcal{G}}^{\varepsilon
	}(\varepsilon N^{\varepsilon^{-1}\varphi_{\eps}})$ is $\mathbb{Q}_{T}^{\varepsilon}$
	a.s.\ (and hence $\bar{\PP}$ a.s.) the unique solution of
	\begin{equation}
		\label{eq:eq627/26cont}
		\bar X^{\eps}(t) = \eps\int_{\X_t} H_{\eps}(\eps^{-1}(t-s), z, \eps^{-1}\bar X^{\eps}(s-)) N^{\eps^{-1}\varphi_{\eps}}(ds\, dz),\; t \in [0, T].
	\end{equation}
Also note that $\bar X^0 = \clg^0\left( \nu _{T}^{\varphi }\right)$	solves the integral equation
\begin{equation}\label{eq:eq647}
	\bar X^0(t) = \int_{\X_t} h(z,\bar X^0(s)) \varphi(s,z) \nu(dz) ds, \; t \in [0, T].
\end{equation}
In order to prove the result we need to show that $\bar X^{\eps}$ converges in distribution to $\bar X^0$.  We start by showing that $\{\bar X^{\eps}\} _{\eps > 0}$ is tight.
Note that, for $t \in [0,T]$,
\begin{align*}
\bar \E\|\bar X^{\eps}\|_{*,t} &\le \bar \E\int_{\X_t} \|\bar H(\eps^{-1}(t-s), z, \bar X^{\eps}(s))\| \varphi_{\eps}(s,z) \nu(dz) ds \\
&\quad + 
\bar \E\int_{\X_t} \|R_{\eps}(\eps^{-1}(t-s), z, \bar X^{\eps}(s))\| \varphi_{\eps}(s,z) \nu(dz) ds\\
&= T_1^{\eps}(t) + T_2^{\eps}(t).
\end{align*}
Using the monotonicity of $\bar H$ we see that
\begin{align*}
	T_1^{\eps}(t) &\le \bar \E \int_{\X_t} \|h(z, \bar X^{\eps}(s))\| \varphi_{\eps}(s,z) \nu(dz) ds\\
	&\le \bar \E \int_{\X_t} ( 1 +  \|\bar X^{\eps}\|_{*,s}) M_h(z)\varphi_{\eps}(s,z) \nu(dz) ds.
\end{align*}
	Using Lemma \ref{lem:tightest} we now have that for every $\delta > 0$
	\begin{equation}
		\label{eq:eq1034/28}
		T_1^{\eps}(t) \le c(\delta, n, M_h) (\int_{\X} M_h(z) \nu(dz)) \int_0^t ( 1+ \bar \E \|\bar X^{\eps}\|_{*,s}) ds + \delta ( 1+ \bar \E\|\bar X^{\eps}\|_{*,t}).
	\end{equation}
Another application of Lemma \ref{lem:tightest} shows that for each fixed $\delta > 0$
\begin{align*}
	T_2^{\eps}(t) &\le \bar \E \int_{\X_t} \vs_{\eps}(z) (1+ \bar \E \|\bar X^{\eps}\|_{*,s})  \varphi_{\eps}(s,z) \nu(dz) ds\\
	&\le c(\delta, n, \vs) \int_{\X} \vs_{\eps}(z) \nu(dz)\int_0^t ( 1+ \bar \E \|\bar X^{\eps}\|_{*,s}) ds + \delta ( 1+ \bar \E\|\bar X^{\eps}\|_{*,t})
\end{align*}
Combining the above estimates on $T_1^{\eps}$ and $T_2^{\eps}$	and choosing $\delta$ sufficiently small, we have by an application of Gronwall's lemma that
\begin{equation}
	\label{eq:1131/28}
	\sup_{\eps > 0} \bar \E\|\bar X^{\eps}\|_{*,T} = \kappa_1 < \infty.
\end{equation}
In order to prove the tightness of $\{\bar X^{\eps}\}_{\eps > 0}$ we will first establish the tightness of  the following closely related collection $\{\ti X^{\eps}\}_{\eps > 0}$
of
$C([0, T]: \R^d)$-valued random variables:
\begin{equation}
	\label{eq:1115/29}
\ti X^{\eps}(t) = \int_{\X_t} h(z,\bar X^{\eps}(s)) \varphi_{\eps}(s,z) \nu(dz) ds, \, t \in [0,T], \, \eps > 0.
\end{equation}
For that we first observe that
$$\|\ti X^{\eps}\|_{*,T} \le (1+ \|\bar X^{\eps}\|_{*,T}) \int_{\X_T} M_h(z) \varphi_{\eps}(s,z) \nu(dz) ds.$$
Combining the above estimate with \eqref{eq:1129/28} and \eqref{eq:1131/28} we now see that
$\sup_{\eps > 0} \bar \E\|\ti X^{\eps}\|_{*,T} < \infty$.  Also, for $0 \le s \le t \le T$ and $\delta > 0$.
\begin{align*}
	\|\ti X^{\eps}(t)-\ti X^{\eps}(s)\| &\le \int_{(s,t]\times \X} \|h(z,\bar X^{\eps}(u))\| \varphi_{\eps}(u,z) \nu(dz) du\\
	&\le (1+\|\bar X^{\eps}\|_{*,T}) \left(c(\delta,n, M_h)(t-s) \int_{\X}M_h(z) \nu(dz) + \delta\right).
\end{align*}
Let $\kappa_2(\delta) = c(\delta,n, M_h)\int_{\X}M_h(z) \nu(dz)$ and consider
$$A_{\alpha} = \{x \in C([0,T]:\R^d): \|x\|_{*,T}\le \alpha, \mbox{ and for every } \delta > 0, \|x(t) - x(s)\| \le \alpha(\kappa_2(\delta)(t-s) + \delta)\}.$$
It is easy to check that for every $\alpha > 0$, $A_{\alpha}$ is a compact subset of $C([0,T]:\R^d)$.
Also from the above estimates, $\sup_{\eps}\bar \P(\ti X^{\eps} \in A_{\alpha}^c) \to 0$ as $\alpha \to \infty$.
This proves the tightness of $\{\ti X^{\eps}\}_{\eps > 0}$.
Next, let for $\eps > 0$,
$$\bar Y^{\eps}(t) = \eps \int_{\X_t} h(z, \bar X^{\eps}(s-)) N^{\eps^{-1}\varphi_{\eps}}(ds\, dz), \, t \in [0,T].$$
Then, for $t \in [0,T]$,
\begin{equation}
	\label{eq:1248/28}
	\bar Y^{\eps}(t) - 	\bar X^{\eps}(t) = \eps \int_{\X_t} \left[h(z, \bar X^{\eps}(s-)) - 
	\bar H(\eps^{-1}(t-s), z, \bar X^{\eps}(s-))\right]  N^{\eps^{-1}\varphi_{\eps}}(ds\, dz)
	+ \clr_1^{\eps}(t),
\end{equation}
where
\begin{align*}
	\|\clr_1^{\eps}\|_{*,T} &\le \eps \sup_{t \in [0,T]} \int_{\X_t} \|R_{\eps}(\eps^{-1}(t-s), z, \bar X^{\eps}(s-))\|
	N^{\eps^{-1}\varphi_{\eps}}(ds\, dz)\\
	&\le  \eps\int_{\X_T}  \vs_{\eps}(z) (\|\bar X^{\eps}(s-))\| + 1) 
	N^{\eps^{-1}\varphi_{\eps}}(ds\, dz).
\end{align*}	
Thus, for every $\delta > 0$
\begin{align*}
	\bar \E \|\clr_1^{\eps}\|_{*,T} &\le \bar \E\left( (\|\bar X^{\eps}\|_{*,T} + 1) \int_{\X_T} \vs_{\eps}(z)\varphi_{\eps}(s,z)
	\nu(dz) ds\right).\\
	&\le    (\bar\E\left\|\bar X^{\eps}\|_{*,T} + 1\right) \left[c(\delta, n, \vs)T \int_{\X} \vs_{\eps}(z)\nu(dz) +
	\delta\right].
\end{align*}
	Since $\int_{\X} \vs_{\eps}(z)\nu(dz)$ converges to $0$ as $\eps \to 0$, we have that
	\begin{equation}
		\label{eq:1027/29}
	\clr_1^{\eps} \mbox{ converges to } 0 \mbox{ in probability in } D([0,T]:\R^d).
\end{equation}
	Next, denoting the first term on the right side of \eqref{eq:1248/28} as 
	$\cls_1^{\eps}(t)$, we  have for $t_0 \in [0, T]$,
	\begin{align*}
		\bar \E\|\cls_1^{\eps}\|_{*,t_0} & \le \eps \bar \E \int_{\X_{t_0}} \|h(z, \bar X^{\eps}(s-))\|  N^{\eps^{-1}\varphi_{\eps}}(ds\, dz)\\
		&\le \bar \E \left((\|\bar X^{\eps}\|_{*,T}+ 1) \int_{\X_{t_0}} M_h(z)\varphi_{\eps}(s,z) \nu(dz) ds\right)\\
		&\le \left( \bar \E \|\bar X^{\eps}\|_{*,T} + 1\right) \left[c(\delta, n, M_h)t_0 \int_{\X} M_h(z)\nu(dz) +
		\delta\right].
	\end{align*}
	Thus, for some $\kappa_1 \in (0, \infty)$, we have for every $\delta > 0$,
	\begin{equation}
		\sup_{\eps > 0} \bar \E\|\cls_1^{\eps}\|_{*,t_0} \le \kappa_1(t_0c(\delta, n, M_h) + \delta).
		\label{eq:1023/29}
		\end{equation}
		Now we consider the interval $(t_0, T]$. Note that, for any $\ups \in (0, t_0)$
		\begin{equation}\label{eq:1017/29}
			\sup_{t \in (t_0, T]}\|\cls_1^{\eps}(t)\| \le 
		\sup_{t \in (t_0, T]}\|\cls_1^{\eps}(t-\ups)\| + \sup_{t \in (t_0, T]} \|\cls_{2,\ups}^{\eps}(t)\|,\end{equation}
		where
		$$
		\cls_{2,\ups}^{\eps}(t) = \eps \int_{(t-\ups, t]\times \X} \left[h(z, \bar X^{\eps}(s-)) - 
		\bar H(\eps^{-1}(t-s), z, \bar X^{\eps}(s-))\right]  N^{\eps^{-1}\varphi_{\eps}}(ds\, dz).$$
		Using the monotonicity of $\bar H$ again, we have that for  $\alpha > 0$,
		\begin{align}
						\sup_{t \in (t_0, T]}\|\cls_1^{\eps}(t-\ups)\| &\le \eps \int_{\X_T} \|h(z, \bar X^{\eps}(s-)) - 
						\bar H(\eps^{-1}\ups, z, \bar X^{\eps}(s-))\| N^{\eps^{-1}\varphi_{\eps}}(ds\, dz)\nonumber\\
						&= \clr_{2,\alpha}^{\eps} + \clr_{3,\alpha}^{\eps},\label{eq:851/28}
										\end{align}
	where
	$$\clr_{2,\alpha}^{\eps} = \eps 1_{B_{\alpha}^{\eps}} \int_{\X_T} \|h(z, \bar X^{\eps}(s-)) - 
	\bar H(\eps^{-1}\ups, z, \bar X^{\eps}(s-))\| N^{\eps^{-1}\varphi_{\eps}}(ds\, dz),$$
	$$\clr_{3,\alpha}^{\eps} =  \eps 1_{(B_{\alpha}^{\eps})^c} \int_{\X_T} \|h(z, \bar X^{\eps}(s-)) - 
	\bar H(\eps^{-1}\ups, z, \bar X^{\eps}(s-))\| N^{\eps^{-1}\varphi_{\eps}}(ds\, dz)$$
	and $B_{\alpha}^{\eps}= \{\omega: \|\bar X^{\eps}\|_{*,T} \le \alpha\}$.
	Let for $\alpha > 0$, $\varpi_{\alpha}: \R_+ \times \XX \to \R_+$ be defined as
	$$
	\varpi_{\alpha}(r,z) = \sup_{\|x\| \le \alpha} \|h(z,x) - \bar H(r,z,x)\|, \; (r,z) \in \R_+\times \X.$$
	Then, from Condition \ref{cond:mainonh}(e), for all $(\alpha, z) \in \R_+\times \X$, 
	$\varpi_{\alpha}(r,z) \to 0$ as $r \to \infty$.  Also, since $\varpi_{\alpha}(r,z) \le M_h(z)(1+\alpha)$ and
	$M_h \in \cll^1(\nu)$, we have that $\int_{\X}\varpi_{\alpha}(r,z) \nu(dz) \to 0$ as $r \to \infty$. Now, for every $\delta > 0,$
	\begin{align*}
		\bar \E \clr_{2,\alpha}^{\eps} &\le \bar \E \int_{\X_T} \varpi_{\alpha}(\eps^{-1}\ups,z) \varphi_{\eps}(s,z) \nu(dz) ds\\
		&\le Tc(\delta, n, M_h)\int_{\X} \varpi_{\alpha}(\eps^{-1}\ups,z) \nu(dz) + \delta (1+\alpha).
			\end{align*}
	Thus $\clr_{2,\alpha}^{\eps} \to 0$ as $\eps \to 0$ for every $\alpha > 0$.
	
	Next, from Markov's inequality and \eqref{eq:1131/28}, for $\eta > 0$
\begin{equation}
	\label{eq:eq558}
\sup_{\eps >0}\bar \P(\clr_{3,\alpha}^{\eps} > \eta) \le \sup_{\eps >0} \bar \P((B_{\alpha}^{\eps})^c)
\le \sup_{\eps >0}\bar \P(\|\bar X^{\eps}\|_{*,T}> \alpha) \le \frac{\kappa_1}{\alpha}.
\end{equation}
	Using the above two observations in \eqref{eq:851/28} we have that for every $t_0 \in (0,T)$ and $\ups \in (0, t_0)$
	\begin{equation}
		\label{eq:1020/29}
	\sup_{t \in (t_0, T]}\|\cls_1^{\eps}(t-\ups)\| \to 0, \mbox{ in probability, as } \eps \to 0.\end{equation}
	We now consider $\cls_{2,\ups}^{\eps}$.  Using monotonicity of $H$,
	\begin{align}
		\sup_{t \in (t_0, T]} \|\cls_{2,\ups}^{\eps}(t)\| &\le \sup_{t \in (t_0, T]} \eps \left\|\int_{(t-\ups, t]\times \X}
		h(z, \bar X^{\eps}(s-)) N^{\eps^{-1}\varphi_{\eps}}(ds\, dz)\right\|\nonumber\\
		&\le \sup_{t \in (t_0, T]} \eps \left\|\int_{(t-\ups, t]\times \X}
		h(z, \bar X^{\eps}(s-)) \ti N^{\eps^{-1}\varphi_{\eps}}(ds\, dz)\right\|\nonumber\\
		&\quad +
		\sup_{t \in (t_0, T]}  \left\|\int_{(t-\ups, t]\times \X}
		h(z, \bar X^{\eps}(s-)) \varphi_{\eps}(s,z) \nu(dz) ds\right\|\nonumber\\
		&= \clr_{4}^{\eps}(t_0, \ups) + \clr_{5}^{\eps}(t_0, \ups),\label{eq:1021/29}
		\end{align}
	where $\ti N^{\eps^{-1}\varphi_{\eps}}(ds\, dz) = N^{\eps^{-1}\varphi_{\eps}}(ds\, dz) - \eps^{-1} \varphi_{\eps}(s,z)\nu_T(ds\, dz)$, the compensated version of $N^{\eps ^{-1} \varphi _{\eps}}$.
	For $\eta > 0$ and $\alpha >0$,
	$$\bar \P(\clr_{5}^{\eps}(t_0, \ups) > \eta) \le \bar \P(\clr_{5}^{\eps}(t_0, \ups) > \eta;\, B_{\alpha}^{\eps}) + \bar \P((B_{\alpha}^{\eps})^c).$$
	Also,
	\begin{align*}
		\clr_{5}^{\eps}(t_0, \ups)1_{B_{\alpha}^{\eps}}
		&\le (1+\alpha) \sup_{t \in (t_0, T]}  \int_{(t-\ups, t]\times \X} M_h(z)\varphi_{\eps}(s,z) \nu(dz) ds\\
		& \le (1+\alpha)\left[\ups c(\delta,n,M_h)\int_{\X}M_h(z)\nu(dz) + \delta\right].
		\end{align*}
		Combining the above two estimates and using \eqref{eq:eq558} once again, we have for every $t_0 \in (0,T)$ and 
		$\eta > 0$,
		\begin{equation}\label{eq:949/29}\sup_{\eps > 0}\bar \P(\clr_{5}^{\eps}(t_0, \ups) > \eta) \to 0, \mbox{ as } \ups \to 0.
		\end{equation}
		We now consider $\clr_{4}^{\eps}(t_0, \ups)$.
		\begin{align}
						\clr_{4}^{\eps}(t_0, \ups) & \le 2\eps\sup_{0\le t \le T} \left\|\int_{\X_t}
						h(z, \bar X^{\eps}(s-)) \ti N^{\eps^{-1}\varphi_{\eps}}(ds\, dz)\right\| = \ti\clr_{4}^{\eps}.
						\label{eq:1034/29}
														\end{align}
	Let $\tau_{\alpha}^{\eps} = \inf \{t \in [0,T]:  \|\bar X^{\eps}(t)\| > \alpha\}$ where the infimum  is taken to be $T$ if the set is empty. Let
	$$\clr_{6,\alpha}^{\eps} = 2 \eps \sup_{0\le t \le T} \left\| \int_{(0, t\wedge \tau_{\alpha}^{\eps}]\times \X}
	h(z, \bar X^{\eps}(s-)) \ti N^{\eps^{-1}\varphi_{\eps}}(ds\, dz)\right\|.$$
	Then from \eqref{eq:eq558}, for $\eta > 0$,
	\begin{equation}
		\label{eq:953/28}
		\bar \P(\clr_{4}^{\eps}(t_0, \ups) > \eta) \le 
		\bar \P(\ti \clr_{4}^{\eps} > \eta) \le \bar \P(\clr_{6,\alpha}^{\eps} > \eta) + \bar \P((B_{\alpha}^{\eps})^c)
		\le \bar \P(\clr_{6,\alpha}^{\eps} > \eta) + \frac{\kappa_1}{\alpha}.
	\end{equation}
	For $r> 0$, write, $\clr_{6,\alpha}^{\eps} = \clr_{7,\alpha}^{\eps,r}+ \clr_{8,\alpha}^{\eps,r}$
	where
	$$\clr_{7,\alpha}^{\eps,r} = 2 \eps \sup_{0\le t \le T} \left\| \int_{(0, t\wedge \tau_{\alpha}^{\eps}]\times \X}
	h(z, \bar X^{\eps}(s-))1_{M_h(z)\le r} \ti N^{\eps^{-1}\varphi_{\eps}}(ds\, dz)\right\|,$$
	$$\clr_{8,\alpha}^{\eps,r} = 2 \eps \sup_{0\le t \le T} \left\| \int_{(0, t\wedge \tau_{\alpha}^{\eps}]\times \X}
	h(z, \bar X^{\eps}(s-))1_{M_h(z)> r} \ti N^{\eps^{-1}\varphi_{\eps}}(ds\, dz)\right\|.$$
	By standard martingale inequalities, for some $\kappa_2 \in (0, \infty)$ (independent of $\alpha,\eps, r$),
	\begin{align*}
	\bar \E(\clr_{7,\alpha}^{\eps,r})^2 &\le  \kappa_2 r (1+\alpha^2)\eps \bar \E\int_{\X_T} M_h(z) \varphi_{\eps}(s,z)\nu(dz) ds\\
	&\le \kappa_2r (1+\alpha^2)\eps\left(c(\delta,n, M_h) \int_{\X}M_h(z)\nu(dz) + \delta\right).
\end{align*}
Thus, for each $r > 0$ and $\alpha > 0$,
$\clr_{7,\alpha}^{\eps,r} \to 0$ in probability as $\eps \to 0$.
Also, for every $\delta > 0$,
\begin{align*}
\bar \E(\clr_{8,\alpha}^{\eps,r}) &\le 8(1+\alpha) \E\int_{\X_T} M_h(z)1_{M_h(z)>r} \varphi_{\eps}(s,z)\nu(dz) ds\\
&\le 8(1+\alpha)\left(c(\delta,n, M_h) \int_{\X}M_h(z)1_{M_h(z)>r}\nu(dz) + \delta\right).
\end{align*}
Since $M_h \in \cll^1(\nu)$, we have that for each $\alpha > 0$, $\sup_{\eps>0}\bar \E(\clr_{8,\alpha}^{\eps,r})$
converges to $0$ as $r \to \infty$.  Combining the above estimates, for each $\alpha > 0$
$$\clr_{6,\alpha}^{\eps} \to 0 \mbox{ in probability as } \eps \to 0.$$
Using this observation in \eqref{eq:953/28}, we have that for each $t_0 \in (0,T)$ and $\ups \in (0, t_0)$
\begin{equation}
	\label{eq:1037/29}
\clr_{4}^{\eps}(t_0, \ups) \mbox{ and } \ti \clr_{4}^{\eps} \mbox{ converge to }
0 \mbox{ in probability as } \eps \to 0.
\end{equation}
Combining this with \eqref{eq:1017/29}, \eqref{eq:1020/29}, \eqref{eq:1021/29} and \eqref{eq:949/29},  we see that 
for every $t_0 \in (0,T)$
$$
\sup_{t \in (t_0, T]}\|\cls_1^{\eps}(t)\| \to 0 \mbox{ in probability as } \eps \to 0.$$
Thus, from \eqref{eq:1023/29}
$$
\sup_{t \in [0, T]}\|\cls_1^{\eps}(t)\| \to 0 \mbox{ in probability as } \eps \to 0.$$
Combining this with \eqref{eq:1027/29} and \eqref{eq:1248/28} we see that
\begin{equation}
	\label{eq:1112/29}
	\bar Y^{\eps} - \bar X^{\eps} \mbox{ converges to } 0 \mbox{ in probability in } D([0,T]:\R^d).
\end{equation}
	Also, since $\|\bar Y^{\eps} - \ti X^{\eps}\|_{*,T} = \frac{1}{2} \tilde \clr_4^{\eps}$, from \eqref{eq:1037/29} we have that
	\begin{equation}
		\label{eq:1113/29}
		\bar Y^{\eps} - \ti X^{\eps} \mbox{ converges to } 0 \mbox{ in probability in } D([0,T]:\R^d).
	\end{equation}
	The above two displays together with the tightness of $\{\ti X^{\eps}\} _{\eps > 0}$ established earlier shows that
	$\{\bar X^{\eps}\} _{\eps > 0}$ is tight.
	Suppose that $(\bar X^{\eps}, \varphi_{\eps})$ converges weakly along a subsequence to
	$(X^0, \ti \varphi)$. Note that $\varphi$ and $\ti \varphi$ must have the same distribution.  Without loss of generality we can assume that convergence is almost sure and it holds along the full 
	sequence.  In fact we  can assume that
	$(\bar X^{\eps},\ti X^{\eps}, \varphi_{\eps})$ converges a.s. to $( X^{0},X^{0}, \ti \varphi)$.
	Then from Lemma \ref{lem:cgce1050}, for all $t \in [0,T]$, 
	$$\int_{\X_t} h(z,\bar X^{\eps}(s)) \varphi_{\eps}(s,z) \nu(dz) ds \to \int_{\X_t} h(z, X^{0}(s)) \ti \varphi(s,z) \nu(dz) ds\; \mbox{ a.s. }$$
	This, along with \eqref{eq:1115/29} shows that $X^0$ solves
	$$ X^{0}(t) = \int_{\X_t} h(z, X^{0}(s)) \ti \varphi(s,z) \nu(dz) ds, \, t \in [0,T],\; \mbox{a.s.}$$
	Since the above equation has a unique solution and $\ti \varphi$ and $\varphi$ have the same distribution we have that
	$X^0$ and $\bar X^0$ have the same distribution, where $\bar X^0$ was defined in \eqref{eq:eq647}. Thus we have shown that $\bar X^{\eps}$ converges in distribution to
	$\bar X^0$.  The result follows.
\end{proof}
\subsection{Completing the Proof of Theorem \ref{thm:main}.}
In view of Theorem \ref{Thm:LDP01} it suffices to check that $\{\clg^{\eps}, \eps > 0\}$, introduced in Condition \ref{cond:uniqsolv} and $\clg^0$ introduced at the beginning of Section \ref{sec:five} satisfy Condition \ref{cond:gensuff}.  Part (a) of the condition is immediate from Proposition \ref{prop:parta}
while part (b) is a consequence of Proposition \ref{prop:verifyb}. \qed

\section*{Acknowledgements}

AB  has been supported in part by the National Science Foundation (DMS-1004418, DMS-1016441, DMS-1305120) and the Army Research
 Office (W911NF-10-1-0158, W911NF-14-1-0331). Part of this work was carried out when PN was visiting University of North Carolina at Chapel Hill.  Support and hospitality of the STOR Department at UNC is gratefully acknowledged.

{\sc
\bigskip
\noi
A. Budhiraja\\
Department of Statistics and Operations Research\\
University of North Carolina\\
Chapel Hill, NC 27599, USA\\
email: budhiraj@email.unc.edu
\skp

\noi
P. Nyquist\\
Division of Applied Mathematics\\
Brown University\\
Providence, RI 02912, USA\\
email: pierre\_nyquist@brown.edu

\skp

}



\end{document}